



\documentclass[a4paper,11pt,fleqn]{article}
\usepackage[official]{eurosym}
\usepackage{amsmath}
\usepackage{amssymb}
\usepackage{theorem}
\usepackage[usenames,dvipsnames]{pstricks}
\usepackage{euscript}
\usepackage{graphicx}
\usepackage{charter}
\usepackage{pifont}
\usepackage{comment}
\usepackage{pgfplots,upgreek,subfigure}

\newtheorem{theorem}{Theorem}[section]
\newtheorem{lemma}[theorem]{Lemma}
\newtheorem{corollary}[theorem]{Corollary}
\newtheorem{proposition}[theorem]{Proposition}
\theoremstyle{plain}{\theorembodyfont{\rmfamily}%
\newtheorem{definition}[theorem]{Definition}}

\RequirePackage[colorlinks,hyperindex]{hyperref} 
\hypersetup{linktocpage=true,citecolor=dblue,linkcolor=dgreen}
\usepackage{cleveref}
\PassOptionsToPackage{normalem}{ulem}
\usepackage{ulem}
\renewcommand{\leq}{\ensuremath{\leqslant}}
\renewcommand{\geq}{\ensuremath{\geqslant}}
\renewcommand{\le}{\ensuremath{\leqslant}}
\renewcommand{\ge}{\ensuremath{\geqslant}}

\newcommand{\email}{}

\oddsidemargin 0.2cm
\textwidth     16.4cm 
\topmargin     0.0cm
\headheight    0.0cm
\textheight    22.0cm
\parindent     5mm
\parskip       10pt
\tolerance     1000


\input{alphabet}

\definecolor{labelkey}{rgb}{0,0.08,0.45}
\definecolor{refkey}{rgb}{0,0.6,0.0}
\definecolor{Brown}{rgb}{0.45,0.0,0.05}
\definecolor{dgreen}{rgb}{0.00,0.49,0.00}
\definecolor{dblue}{rgb}{0,0.08,0.75}

\usepackage{amsopn}

\newcommand{\zer}{\ensuremath{\operatorname{zer}}}

\newcommand{\dist}{\operatorname{dist}}



\theoremstyle{plain}{\theorembodyfont{\rmfamily}%
}
\theoremstyle{plain}{\theorembodyfont{\rmfamily}%
\newtheorem{assumption}[theorem]{Assumption}}
\theoremstyle{plain}{\theorembodyfont{\rmfamily}%
\theoremstyle{plain}{\theorembodyfont{\rmfamily}%
}
\theoremstyle{plain}{\theorembodyfont{\rmfamily}%
\newtheorem{example}[theorem]{Example}}
\theoremstyle{plain}{\theorembodyfont{\rmfamily}%
\newtheorem{remark}[theorem]{Remark}}
\theoremstyle{plain}{\theorembodyfont{\rmfamily}%
\theoremstyle{plain}{\theorembodyfont{\rmfamily}%
}

\numberwithin{equation}{section}



\begin{document}

\title{\sffamily A stochastic use of the Kurdyka-\L ojasiewicz property: \\ Investigation of optimization algorithms behaviours
in a non-convex differentiable framework.  
}

\author{
Jean-Baptiste Fest
\thanks{\textbf{Corresponding author},  CMAP, Ecole Polytechnique, Institut Polytechnique de Paris, Palaiseau, France \\ (\email{jean-baptiste.fest@polytechnique.edu}).}
\and Audrey Repetti\thanks{
Maxwell Institute for Mathematical Sciences and School of Mathematical and Computer Sciences, Heriot-Watt University, Edinburgh, UK (\email{a.repetti@hw.ac.uk}).} 
\and 
Emilie Chouzenoux\thanks{CVN, CentraleSupélec, Inria, Université Paris-Saclay, Gif-sur-Yvette, France (\email{emilie.chouzenoux@inria.fr}).}
\and
}

\date{}

\maketitle

\vskip 8mm

\maketitle

\begin{abstract}
Asymptotic analysis of generic stochastic algorithms often relies on descent conditions. 
In a convex setting, some technical shortcuts can be considered to establish asymptotic convergence guarantees of the associated scheme. However, in a non-convex setting, obtaining similar guarantees is usually more complicated, and relies on the use of the Kurdyka-\L ojasiewicz (K\L) property. 
While this tool has become popular in the field of deterministic optimisation, it is much less widespread in the stochastic context and the few works making use of it are essentially based on \textit{trajectory-by-trajectory} approaches. 
In this paper, we propose a new framework for using the $\text{K\L}$ property in a non-convex stochastic setting based on conditioning theory. We show that this framework allows for deeper asymptotic investigations on stochastic schemes verifying some generic descent conditions. 
We further show that our methodology can be used to prove convergence of generic stochastic gradient descent (SGD) schemes, and unifies conditions investigated in multiple articles of the literature. 
\end{abstract}

Stochastic processes, non-convex optimization, Kurdyka-\L ojasiewicz property, stochastic gradient descent. 

65K05, 
90C26, 
90C15, 
90C53. 

\section{Introduction}

The objective of this work is to approximate a solution of an unconstrained optimization problem of the form of 
\begin{equation} \label{eq:pb}
\underset{\wb\in \eR^N}{\text{minimize}}\quad F(\wb),
\end{equation}
where $F\colon \eR^N \to \eR$ is a continuously differentiable function.  
Specifically, we will investigate the behaviour of a process $(\wv_k)_{k\in \Nbb}$ defined on a probabilistic space $\left(\Omega, \Fc, \Pbb \right)$ and belonging to the finite dimensional space $\eR^N$, that will aim to asymptotically approximate such a solution
\cite{duflo2013random, robbins1951stochastic}. 
Celebrated examples are often based on stochastic gradient descent (SGD) algorithms. 

When \eqref{eq:pb} is solved using a deterministic scheme, a set of recent proofs of convergence results rely on the Kurdyka-\L ojasiewicz ($\text{K\L}$) property, that has the advantage of promoting interesting asymptotic behaviors when $F$ is non-necessarily convex. 
In this context, $\text{K\L}$   has been used to prove convergence of proximal point algorithms \cite{attouch2010proximal, attouch2013convergence}, of simple splitting algorithms such as the forward-backward algorithm and its variants~\cite{attouch2013convergence, chouzenoux2014variable, Bolte2014, frankel2015, chouzenoux2016, repetti2021,bonettini2020convergence,bonettini2021new}, as well as other algorithms based on the majorization-minimization principle \cite{chouzenoux2013majorize,chouzenouxconvergence,chalvidal2022block}.
A natural question is to investigate the transfer of such proof techniques from the deterministic setting, to the stochastic setting, for asymptotic analysis including a.s. convergence of stochastic processes.
Such an extension is quite challenging, mainly due to the dynamics of the functions involved in $\text{K\L}$   conditions, that cannot be controlled in a stochastic environment.

In this work we will develop a new framework, based on $\text{K\L}$   theory \cite{kurdyka1998gradients, bolte2008characterizations}, to derive almost sure (a.s.) convergence guarantees of the stochastic process $(\wv_k)_{k\in \Nbb}$, for the resolution of \eqref{eq:pb} when $F$ is not convex.

\paragraph{Stochastic approximation theory.}
Following stochastic approximation theory introduced in \cite{robbins1951stochastic,robbins1971convergence} and similarly to the stabilisation theory for dynamic systems \cite{terrell2009stability}, the asymptotic behaviour of $(\wv_k)_{k\in \Nbb}$ to approximate the minimum of $F$ can be investigated by introducing an auxiliary function $V\colon\Hc \to \Rbb$ acting as a Lyapunov function, with $\Hc$ being a finite-dimensional Euclidean space. 
To this aim, we introduce an auxiliary augmented process $(\xv_k)_{k\in \Nbb}$ defined on $\left(\Omega, \Fc, \Pbb \right)$ that will be used to study the behaviour of $V$, including process $(\wv_k)_{k\in \Nbb}$ and any underlying parameters of the associated recursive scheme (e.g., step-sizes). 
In particular, similarly to \cite{williams1991probability}, in this work we will focus on functions $V$ such that $\left(V(\xv_k)\right)_{k\in \Nbb}$ follows a supermartingale behavior. 
Such an approach is instrumental to avoid the strong assumption that process $(F(\wv_k))_{k\in \Nbb}$ verifies a supermartingale condition. In fact, stochastic approximation theory results highlight that $(F(\wv_k))_{k\in \Nbb}$ usually only verifies almost-supermartingale condition \cite{robbins1971convergence}. In many cases, however, it is fairly easy to construct $V$ that holds supermartingale condition and enabling to study the asymptotic behaviour of $(F(\wv_k))_{k\in \Nbb}$ \cite{robbins1971convergence}. \\
In this work, we assume that such a Lyapunov function $V$ exists, and we aim to design a framework, with mild assumptions, that allows obtaining stronger asymptotic convergence results on $(\wv_k)_{k\in \Nbb}$, including its almost-sure convergence to a solution to problem~\eqref{eq:pb}. 
Such a result is challenging to establish in a non-convex stochastic context, and has mainly been studied so far for particular cases. 
For instance, \cite{ljung1978strong} proved this result when the set of accumulation points of $(\wv_k)_{k\in \Nbb}$ is included in a subset that is only made up of isolated points. Without this strong assumption, the most common approach to handle non-convex problems consists in adopting the ODE method \cite{benaim1996dynamical}. This method uses the fact that the asymptotic behaviour of most stochastic schemes is very similar to the one of some differential equations. 
Nevertheless the noise conditions considered in these works may be difficult to verify in practice, hence restricting their practical use.
Instead, in this work we aim to introduce a framework that allows the use of the $\text{K\L}$   property to investigate almost-sure convergence of $(\wv_k)_{k\in \Nbb}$ under mild assumptions that are generally assumed to be true in modern stochastic optimization (i.e., supermartingale like properties).

However, as explained earlier, although $\text{K\L}$   conditions have been widely used to investigate convergence of deterministic schemes in a non-convex context, their use in a stochastic framework is not straightforward. 
In the next paragraphs we better explain the \L ojasiewicz and $\text{K\L}$   properties, and give an overview of their use in the stochastic literature.

\paragraph{Non-convex optimization and \L ojasiewicz theory.} 
\L ojasiewicz condition \cite{lojasiewicz1993geometrie} was originally introduced to study the behaviour of trajectories of some bounded gradient flow in a continuous setting, for the class of analytical functions, under the form of a local curvature relation. 
The works of \L ojasiewicz have then been transferred later to the discrete deterministic framework. In particular, it is shown in \cite{absil2005convergence} that the \L ojasiewicz condition can be used to establish the convergence of a deterministic iterative approximation scheme starting from a descent condition. \\
The \L ojasiewicz condition being originally designed to study gradient flow trajectories, it naturally shares strong connections with the ODE method. 
Indeed, combining this fact to works such as \cite{benaim1996dynamical, chill2009applications}, Benaim provided in \cite{benaim2016strict} strong convergence guarantees of the SGD algorithm when applied to real analytic non-convex cost functions, without further assumption on the set of accumulating points of $(\wv_k)_{k\in \Nbb}$. 
Almost at the same time, Tadi\'{c} published similar results in \cite{tadic2015convergence}, proving convergence of SGD iterations under the \L ojasiewicz condition for real analytic non-convex cost functions, using a trajectory-based proof.
Although the works of Benaim and Tadi\'{c} \cite{benaim1996dynamical, benaim2016strict, tadic2015convergence} based on \L ojasiewicz condition 
made a breakthrough to investigating asymptotic behaviour of stochastic processes within a non-convex context, the ``trajectory-by-trajectory approach" of their analysis restricts their framework to noise conditions that may be difficult to satisfy in practice. 
%
%
In the recent work \cite{dereich2021convergence}, the noise conditions have been relaxed thanks to the use of a combination of conditioning theory with the use of \L ojasiewicz condition in a non-convex setting. 


\paragraph{Non-convex optimization and $\text{K\L}$ theory.} 
Kurdyka proposed a generalization of the \L ojasiewicz condition in \cite{kurdyka1998gradients} that holds for differentiable functions definable an o-minimal structure. Similarly to the \L ojasiewicz condition, the use of $\text{K\L}$   condition for discrete deterministic approximations is fairly recent, and its popularity is mainly due to the pioneering works \cite{bolte2010characterizations, attouch2010proximal, Bolte2014}. 
Nevertheless, the use of $\text{K\L}$ property in the stochastic framework remains rare and seems to be limited to a few articles. 
In \cite{gadat2017optimal}, the authors used the $\text{K\L}$   condition to show $L^1$ and $L^2$ convergence of some stochastic schemes.
Further, \cite{driggs2020spring} proposed a new version of the $\text{K\L}$   condition, in expectation, to study the a.s. convergence of their specific stochastic proximal-gradient algorithm leveraging a mini-batch structure, in a non-convex non-differentiable setting. 
In \cite{milzarek2023convergence} the authors considered a similar algorithm with more general noise conditions, but with a more restrictive $\text{K\L}$   formulation and using a ``trajectory-by-trajectory" approach.  Finally, the authors of \cite{li2021convergence} mention that a stochastic formulation of the $\text{K\L}$   inequality would enable to study the accumulation points of their process, without however providing any theoretical results. \\
Hence, to our knowledge, no asymptotic analysis including a.s. convergence of generic stochastic processes has been developed yet.

\paragraph{Contributions.}
In this paper, we develop a framework to study the a.s. convergence of a generic stochastic process $(\wv_k)_{k\in \Nbb}$ to a critical point of a differentiable non-convex function $F$, under the $\text{K\L}$   condition.
Starting from an augmented process $\left(V(\xv_k)\right)_{k\in \Nbb}$ following a supermartingale behaviour, we make use of the $\text{K\L}$   condition (instead of typical convexity assumption) to obtain strong asymptotic results. 
Unlike the ODE method or the ``trajectory-by-trajectory" approach, our strategy is reminiscent of the deterministic proof methodology typically developed in \cite{Bolte2014}, that we adapt to a stochastic framework based on conditioning theory. 
We further show that our conditions enable the convergence of a wide class of state-of-the-art SGD algorithms \cite{berahas2016multi, bertsekas2000gradient, bollapragada2019exact, bottou2018optimization, chouzenoux2022sabrina, ghadimi2013stochastic, gower2019sgd, jahani2021doubly, schmidt2013fast, wang2017stochastic}, summarized in Table~\ref{tab1}. 

\paragraph{Article outline.} 
The rest of the paper is organized as follows. 
Section~\ref{sec2} presents the basic assumptions and foundational results needed throughout the article. 
Section~\ref{sec3} can be considered as the cornerstone of our theoretical analysis. In this section, we introduce a framework to enable the use of the $\text{K\L}$   property (for non-convex functions) in a stochastic setting. 
We then leverage this framework in Section~\ref{sec4}, to present our main asymptotical result, consisting in a summability condition involving a generic (differentiable, non-convex) Lyapunov function $V$. 
In Sections~\ref{sec5} and~\ref{sec6} we apply these results to deduce convergence guarantees of the SGD scheme, under generic assumptions. 
Finally, we give our conclusion in Section~\ref{sec7} and discuss the possible extension of the current work to the non-differentiable case. 




\paragraph{Notation}
$(\Hc,\left\langle \cdot | \cdot \right\rangle_{\Hc} )$ correspond to the (finite-dimensional) Euclidean space under study. 
$\|\cdot \|_{\Hc}$ denotes the canonical norm associated with $\Hc$. For any subset $E\subset \Hc$, the distance function to this set is denoted by $\dist_{\Hc}(\cdot, E) = \inf_{\xb\in \Hc} \| \cdot - \xb \|_{\Hc}$. 
Bold letters as $\xb$ are used for deterministic vectors, while straight bold letters as $\xv$ are used for stochastic vectors. Similarly, $x$ is used for denoting a deterministic scalar variable, while straight $\xD$ denotes a stochastic scalar variable. 
For a given function $F \colon \Hc\longrightarrow \Rbb$, the variable $F(\xv)$ denotes the stochastic value of function $F$ evaluated at the stochastic variable $\xv$. 
For a given $E\subset \Hc$ we denote $|E|$ the cardinal of $E$, and $F(E):=\{F(\xb)~|~\xb\in \Hc\}$ is the image under $F$ of $E$. Considering $U$ an open-set of $\Hc$, if $F$ is differentiable at $\xb \in U$, $\nabla F(\xb)$ denotes the gradient of $F$ at $\xb$. If $F$ is differentiable over $\Hc$, $\zer \nabla F$ corresponds to the set of zeros of $\nabla F$, i.e., the set of critical/stationary points of $F$. \\
Let $\left(\Omega, \Fc, \Pbb \right)$ be a probability space.
We say that a condition holds \emph{almost surely} (a.s.) if it holds on a probability-one event of $\mathcal{F}$.
We denote by $\Ebb[\cdot]$ the expectation operator, and $\Ebb[\cdot|\Gc]$ the conditional expectation operator with respect to a generic sub sigma-algebra $\Gc\subset \Fc$. 

\section{General assumptions and preliminary results}
\label{sec2}

%
Let $(\xv_k)_{k\in \Nbb}$ be a process belonging to $\left(\Omega, \Fc, \Pbb \right)$, and $\chi_{\infty}$ be its set of accumulation (or cluster) points\footnote{$\chi_{\infty}$ is a random set from $\Omega$ to $2^{\Hc}$, the set of subsets of $\Hc$.}. 
We introduce the assumptions and the framework under which we will study the behaviour of $(\xv_k)_{k\in \Nbb}$, focusing on a type-Lyapunov function $V \colon \Hc \to \Rbb$. 


\subsection{Assumptions}\label{P3.1}

The following generic assumptions will guide us throughout the remainder of this work.  

\begin{assumption}
\label{ass:H1}
$V$ is coercive and continuously differentiable. 
\end{assumption}

\begin{assumption}
\label{ass:H2}
\begin{enumerate}
    \item  \label{ass:H2:i}
    The sequence $(V(\xv_k))_{k\in \Nbb}$ converges a.s. to a random variable $\VD_{\infty}$, such that $\VD_{\infty}<+\infty$ a.s.
    \item \label{ass:H2:ii}
    There exists a deterministic non-empty subset $\Gamma$ of $\Hc$ such that $|V(\Gamma)|<+\infty$ a.s. and $\Gamma \cap \chi_\infty \neq \emptyset$. 
\end{enumerate}
\end{assumption}

A few remarks can be made on the generality of these assumptions:
\begin{remark}
    \begin{enumerate}
        \item Assumption~\ref{ass:H1} ensures the existence of a minimizer for $V$ \cite{bertsekas1997nonlinear}.
        \item Assumption~\ref{ass:H2} gives information on the behaviour of $(\xv_k)_{k\in \Nbb}$, and is thus similar to the general field of stochastic approximation \cite{duflo2013random}. 
        \item Assumption~\ref{ass:H2}\ref{ass:H2:i} can usually  be deduced from an approximation scheme verifying a descent condition of the form of an almost-supermatingale inequality \cite{robbins1971convergence}. 
        \item 
        Assumption~\ref{ass:H2}\ref{ass:H2:ii} will be necessary in the stochastic framework considered in this work, in particular due to the fact that $\chi_{\infty}$ is a random set. This assumption can be interpreted as the need for almost all trajectories $(\chi_{\infty}(\omega))_{\omega \in \Omega}$ to share a common asymptotic characteristic, represented by their common intersection with a finite set $\Gamma$. This set will be used as a substitute to the ``trajectory-by-trajectory" approach adopted in works such as \cite{benaim1996dynamical, benaim2016strict, tadic2015convergence}, in order to deduce more general convergence guarantees. Specifically, $\Gamma$ will be used to construct uniformization properties similar to the deterministic work \cite{Bolte2014}, that will allow us to work independently from $\omega$-variable. 
        It is worth noticing that Assumption~\ref{ass:H2}\ref{ass:H2:ii} is not too constraining, and such a $\Gamma$ exists for a wide class of problems. 
    \end{enumerate}
\end{remark}

The practicality of these conditions will be studied in the context of the SGD scheme, in Sections~\ref{sec5} and \ref{sec6}.

\subsection{Preliminary results}

The next two results will guide us throughout the rest of our analysis. The first one, Proposition~\ref{prop0}, will provide us with some indications on the structure of subset $\Gamma$ verifying Assumption~\ref{ass:H2}\ref{ass:H2:ii} in a generic context. The second result, Proposition~\ref{prop1}, gives some properties on process $(\xv_k)_{k\in \Nbb}$.

\begin{proposition}\label{prop0}
Let $f: \Hc \to \Rbb$ be a continuous coercive function and $C$ be a non-empty subset of $\Hc$ such that $|f(C)|<+\infty$. 
The closure of $C$, denoted $\overline{C}$, can be written as a finite union of non-empty compact sets, such that $f$ is constant on each of them. 
\end{proposition}

\vspace{0.4cm}

\begin{proof} 
We first show that $C$ is bounded. If not, there would exist a sequence $(\vb_k)_{k\in \Nbb}$ of vectors of $C$ such that $\|\vb_k\|_{\Hc}\underset{k\to +\infty}{\longrightarrow} +\infty$, which, by coercivity, conducts to $f(\vb_k)\underset{k\to +\infty}{\longrightarrow} +\infty$ and finally contradicts the finiteness of $f(C)$.

Since $|f(C)|<+\infty$, we can denote $I=|f(C)|$ and $f(C)=\left \{f_1, \ldots,f_I \right\}~(f_1\neq f_2 \cdots\neq f_I)$. 
As $C$ is bounded, it follows that $\overline{C}$ is also bounded. Moreover, since $f(C)$ is finite and $f$ continuous, we also have $f\left(\overline{C}\right)=f(C) $. 
So $\overline{C}\subset f^{-1}\left(f(C)\right)=\bigcup_{i=1}^I f^{-1}\left(\{f_i\} \right)$, and we can write $\overline{C}=\bigcup_{i=1}^I C_i$ where, for all $i\in \{1, \ldots,I\}$, $C_i=f^{-1}\left(\{f_i\} \right)\cap \overline{C}$.

Let us now fix $i\in \{1,\ldots,I\}$. 
On the one hand, $C_i$ is closed has an intersection of two closed sets. 
On the other hand, since $\overline{C}$ is bounded, so is $C_i$. 
We can hence deduce that $C_i$ is compact (since $\Hc$ is of finite dimension). 
Moreover, due to $f(\overline{C})=f(C)(=\{f_1,\ldots,f_I\})$, there exists $\vb\in \overline{C}$ such that $f(\vb)=f_i$ and so $C_i$ is not empty. 
Finally, the fact that $C_i\subset f^{-1}\left(\{f_i\} \right)$ ensures that $f$ is constant on $C_i$ (with $f(C_i)=f_i$).     
\end{proof}

The next proposition provides technical topological results on $\chi_{\infty}$, the set of cluster points of $(\xv_k)_{k\in \Nbb}$, and on $\VD_{\infty}$, the limit of $(V(\xv_k))_{k\in \Nbb}$. 
The proof of this result is reminiscent from classical arguments often encountered in the deterministic non-convex setting as in \cite{attouch2010proximal,Bolte2014}, that we generalize here to a stochastic framework.

\begin{proposition}\label{prop1}
Under Assumptions~\ref{ass:H1} and \ref{ass:H2}, we have 
\begin{enumerate}
    \item\label{prop1:i} $(\xv_k)_{k\in \Nbb}$ is a.s. bounded.
    \item\label{prop1:ii} $\chi_{\infty}$ is a.s. non empty and compact.
    \item\label{prop1:iii} $\left(\dist(\xv_k,\chi_{\infty})\right)_{k\in \Nbb}$ converges a.s. to $0$.
    \item\label{prop1:iv} $\VD_{\infty}(\omega)\in V(\Gamma)$ a.s..
\end{enumerate}
\end{proposition}

\vspace{0.4cm}

\begin{proof}

According to Assumption~\ref{ass:H2}, there exists a set $\Lambda\subset\Omega$ of probability one where, for all $\omega \in \Lambda$,
\begin{subequations}
\begin{align} 
\label{4.20a}
&\lim\limits_{k\rightarrow+\infty} V(\xv_k(\omega))=V_{\infty}(\omega)<+\infty, \\ 
\label{4.20b}
&\Gamma \cap \chi_\infty(\omega) \neq \varnothing.
\end{align}
\end{subequations}
Inequality \eqref{4.20a} implies that $(V(\xv_k(\omega)))_{k \in \eN}$ is a bounded sequence. 
According to Assumption~\ref{ass:H1}, $V$ being coercive, $(\xv_k(\omega))_{k \in \eN}$ is also bounded. 
It follows that the set of cluster points $\chi_{\infty}(\omega)$ is non empty, bounded and closed in $\Hc$, hence compact. This proves \ref{prop1:i} and \ref{prop1:ii}. 

We now show that \ref{prop1:iii} holds. 
By contradiction, if \ref{prop1:iii} does not hold, then, for every $\omega \in \Lambda$,
there exist $\varepsilon>0$ and a subsequence $(\xv_{\phi(k)}(\omega))_{k\in \Nbb}$ such that 
\begin{equation}\label{4.20c}
    (\forall k \in \Nbb)\quad \dist \left(\xv_{\phi(k)}(\omega),\chi_{\infty}(\omega)\right)>\epsilon.
\end{equation}
Since $(\xv_k(\omega))_{k\in \Nbb}$ is bounded, $(\xv_{\phi(k)}(\omega))_{k\in \Nbb}$ is also bounded. So the set of cluster points of $(\xv_{\phi(k)}(\omega))_{k\in \Nbb}$ is non-empty and included in $\chi_{\infty}(\omega)$.
Thus, there exists another subsequence $(\xv_{(\phi\circ \widetilde{\phi})(k)}(\omega))_{k\in \Nbb}$ and a point $\xb_{\infty} (\omega)\in \chi_{\infty}(\omega)$ such that $\left\|\xv_{(\phi\circ \widetilde{\phi})(k)}(\omega)- \xb_{\infty} (\omega)\right\|\underset{k\to+\infty}{\longrightarrow} 0$. Hence, $\dist \left(\xv_{(\phi\circ \widetilde{\phi})(k)}(\omega),\chi_{\infty}(\omega)\right)\underset{k\to+\infty}{\longrightarrow} 0$,
which contradicts \eqref{4.20c} and thus makes \ref{prop1:iii} true.

We finally prove \ref{prop1:iv}. For every $\omega\in \Lambda$, there exist $\xb_{\infty} (\omega)\in \Gamma$ and a subsequence $\left(\xv_{\psi(k)}(\omega)\right)_{k \in \eN}$ such that 
$\xv_{\psi(k)}(\omega)\underset{k \to + \infty}{\longrightarrow} \xb_{\infty}(\omega)$,
and $V(\xv_{\psi(k)}(\omega))\underset{k \to + \infty}{\longrightarrow} \VD_{\infty}(\omega)$. 
In addition, since $V$ is continuous, we deduce that $V(\xv_{\psi(k)}(\omega))\underset{k \to + \infty}{\longrightarrow} V\left(\xb_{\infty}(\omega)\right)$. 
Hence $\VD_{\infty}(\omega) = V\left(\xb_{\infty}(\omega)\right) \in V(\Gamma)$ which concludes the proof.
\end{proof}

\section{$\text{K\L}$ theory as a baseline of improvement}
\label{sec3}

In this section we present cornerstone results that will be used in the remainder of the paper to conduct the theoretical analysis of stochastic schemes. 
Specifically, we propose a new use of the $\text{K\L}$   property that does not explicitly depend on $\omega$ variable. 
This will allow us to derive new asymptotic results leveraging mathematical tools different from the standard ODE method and ``trajectory-by-trajectory" strategy used in \cite{benaim1996dynamical,dereich2021convergence, tadic2015convergence}.

To this aim, we first give some notation in Section~\ref{sec3.0} and recall the $\text{K\L}$   framework introduced in \cite{Bolte2014} in Section~\ref{sec3.1}. We then provide an extended version of this framework in Section~\ref{sec3.2}, and we finally introduce in Section~\ref{sec3.3} the stochastic framework that will enable us to us the $\text{K\L}$   property to study a.s. convergence of stochastic differentiable schemes under mild assumptions.

\subsection{A set of concave desingularization functions}
\label{sec3.0}

In this section we introduce a specific class of functions that will be necessary to reframe the $\text{K\L}$   property into a stochastic framework.

\begin{definition}\cite{bolte2010characterizations}\label{def:desing}
    Let $\zeta \in (0,+\infty]$. We denote $\Phi_{\zeta}$ the set of \textit{concave desingularization functions} defined as the set of concave functions $\varphi\colon [0,\zeta) \mapsto \mathbb{R}_{+}$ such that $\varphi(0) = 0$, $\varphi$ is continuous in $0$, $\varphi \in C^1((0,\zeta))$, and, for every $ s \in (0,\zeta)$, $\varphi'(s) > 0$.
\end{definition}

In Definition~\ref{def:desing}, we can have $\zeta = +\infty$. 
The link between $\Phi_{\zeta}$ for $\zeta \in [0, +\infty)$ and $\Phi_{+\infty}$ is described by the following proposition which we will use much later for essentially technical purposes in section \ref{sec4}. However, we have preferred to present the latter in this subsection for the sake of consistency.

\begin{proposition}\label{prop3}
Let $\zeta\in(0,+\infty)$. Any function $\varphi$ of $\Phi_{\zeta}$ admits a bounded extension $\widetilde{\varphi}$ belonging to $\Phi_{+\infty}$.  
\end{proposition}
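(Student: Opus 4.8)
The plan is to show that both $\varphi$ and $\varphi'$ stabilize as $s \to \zeta^-$, and then to glue onto $\varphi$ a tail on which the derivative decays to $0$ while staying strictly positive, so that the resulting $\widetilde\varphi$ is bounded, concave, and $C^1$. First I would record two consequences of concavity. Since $\varphi$ is concave and $C^1$ on $(0,\zeta)$, its derivative $\varphi'$ is non-increasing; being also positive it is bounded, so $\ell := \lim_{s\to\zeta^-}\varphi'(s)$ exists in $[0,+\infty)$. Fixing any $s_0 \in (0,\zeta)$, concavity gives $\varphi(s) \le \varphi(s_0) + \varphi'(s_0)(\zeta - s_0)$ for $s \in (s_0,\zeta)$, so $\varphi$ is bounded above; being nondecreasing (positive derivative) it therefore admits a finite limit $L := \lim_{s\to\zeta^-}\varphi(s) \ge 0$.

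Next I would build the extension by appending, beyond a suitable anchor point $s_\star \in (0,\zeta]$, a tail whose derivative is positive, continuous, non-increasing and integrable. Writing $a := \lim_{s \to s_\star^-}\varphi(s)$ and $b := \lim_{s\to s_\star^-}\varphi'(s) > 0$, a convenient choice is
$$\widetilde\varphi(s) = a + b\bigl(1 - e^{-(s - s_\star)}\bigr), \qquad s \ge s_\star,$$
together with $\widetilde\varphi = \varphi$ on $[0, s_\star)$. By construction $\widetilde\varphi(s_\star) = a$ matches the left value, and $\widetilde\varphi'(s) = b\,e^{-(s-s_\star)}$ matches the left derivative $b$ at $s_\star$ while remaining in $(0,b]$; hence $\widetilde\varphi$ is $C^1$ across $s_\star$, its derivative is non-increasing on all of $(0,+\infty)$ (so $\widetilde\varphi$ is concave), and the bound $\widetilde\varphi(s) \le a + b$ yields boundedness. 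The requirements $\widetilde\varphi(0)=0$, continuity at $0$, and $\widetilde\varphi \ge 0$ are inherited from $\varphi$, so $\widetilde\varphi \in \Phi_{+\infty}$.

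When $\ell > 0$ I would anchor at $s_\star = \zeta$, taking $a = L$ and $b = \ell$; then $\widetilde\varphi$ coincides with $\varphi$ on the whole of $[0,\zeta)$, giving a genuine extension. The delicate point, and the main obstacle, is the degenerate case $\ell = 0$: there no bounded member of $\Phi_{+\infty}$ can agree with $\varphi$ all the way up to $\zeta$, since $C^1$-ness would force $\widetilde\varphi'(\zeta) = 0$, contradicting positivity of the derivative on $(0,+\infty)$. I would resolve this by anchoring at an interior point $s_0 \in (0,\zeta)$ with $\varphi'(s_0) > 0$ (which exists since $\varphi' > 0$ on $(0,\zeta)$), so that $\widetilde\varphi$ coincides with $\varphi$ on $[0,s_0]$, in particular on a neighborhood of $0$, which is exactly the regime relevant for the KL estimates, while extending to a bounded element of $\Phi_{+\infty}$ on $[0,+\infty)$.
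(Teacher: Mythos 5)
Your construction follows essentially the same route as the paper: establish that $l_1=\lim_{s\to\zeta^-}\varphi(s)$ and $l_2=\lim_{s\to\zeta^-}\varphi'(s)$ exist and are finite via monotonicity and concavity, then glue on a bounded, concave, $C^1$-matching tail (you use $a+b(1-e^{-(s-s_\star)})$ where the paper uses $l_1+l_2\zeta(1-\zeta/s)$; both have value $a$ and derivative $b$ at the junction and a non-increasing positive derivative beyond it, so this is a cosmetic difference). Two points where your write-up is actually sharper than the paper's. First, you bound $\varphi$ near $\zeta$ using the tangent at an interior point $s_0$, whereas the paper invokes $\varphi'(0)$, which need not exist since the definition of $\Phi_\zeta$ only requires $\varphi\in C^1((0,\zeta))$ and continuity at $0$. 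Second, and more substantively, you are right to flag the degenerate case $\ell=l_2=0$: there the statement as literally phrased fails (any $C^1$ extension agreeing with $\varphi$ on all of $[0,\zeta)$ would have $\widetilde{\varphi}'(\zeta)=0$, violating the strict positivity required of members of $\Phi_{+\infty}$), and the paper's own formula silently breaks in exactly this case, since its tail derivative $l_2\zeta^2/s^2$ vanishes identically when $l_2=0$. Your fix --- anchoring the tail at an interior point $s_0<\zeta$ so that $\widetilde{\varphi}$ agrees with $\varphi$ only on $[0,s_0]$ --- no longer yields an extension in the strict sense, but it does produce a bounded element of $\Phi_{+\infty}$ that coincides with $\varphi$ on a neighbourhood of $0$, which is the regime in which the KL inequality is invoked downstream (e.g.\ in the passage from \eqref{stoKL_2} to \eqref{stoKL_2_bis}); if you wanted to keep the proposition as stated, the cleaner repair would be to restrict it to the case $l_2>0$ or to weaken ``extension'' accordingly. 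In short: correct, same method, and your treatment exposes a genuine edge case the paper overlooks.
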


\vspace{0.4cm}

\begin{proof}
Let $\varphi \in \Phi_{\zeta}$. Since $\varphi'$ is positive, it follows that $\varphi$ is an increasing function. 
Then $l_1= \lim_{s\to \zeta^{-} }\varphi(s)$ exists and lies in $[0,+\infty]$. Moreover the concavity and differentiability of $\varphi$ with $\varphi(0) = 0$ ensure that, for every $s\in (0,\zeta)$, $\varphi(s) \leq s \varphi'(0)$.
Passing to the limit 
thus gives $l_1\leq \zeta \varphi'(0) $ and then $l_1<+\infty$. 

Moreover, since $\varphi'$ is decreasing on $(0,\zeta)$ (due to the concavity of $\varphi$) and positive, we conclude that $l_2= \lim_{s\to \zeta^{-} }\varphi'(s)$ exists and lies in $[0,+\infty)$.

Finally, it remains easy to verify that function $\widetilde{\varphi} \colon [0, +\infty) \to \eR_+$ defined as
\begin{equation}\label{prop2_2}
    (\forall s\geq 0)\quad 
    \widetilde{\varphi}(s) = 
    \begin{cases}
    \varphi(s)  &\text{ if } s\in [0,\zeta),\\
    l_1+l_2 \zeta\left(1-\frac{\zeta}{s}\right) &\text{ otherwise}.
    \end{cases}
\end{equation}
$\widetilde{\varphi}$ belongs to $\Phi_{+\infty}$ and is bounded (see figure \ref{fig:KL_prolongement} below for an illustrative example).
\end{proof}

\begin{figure}[h]
    \centering
    \includegraphics[scale=0.32]{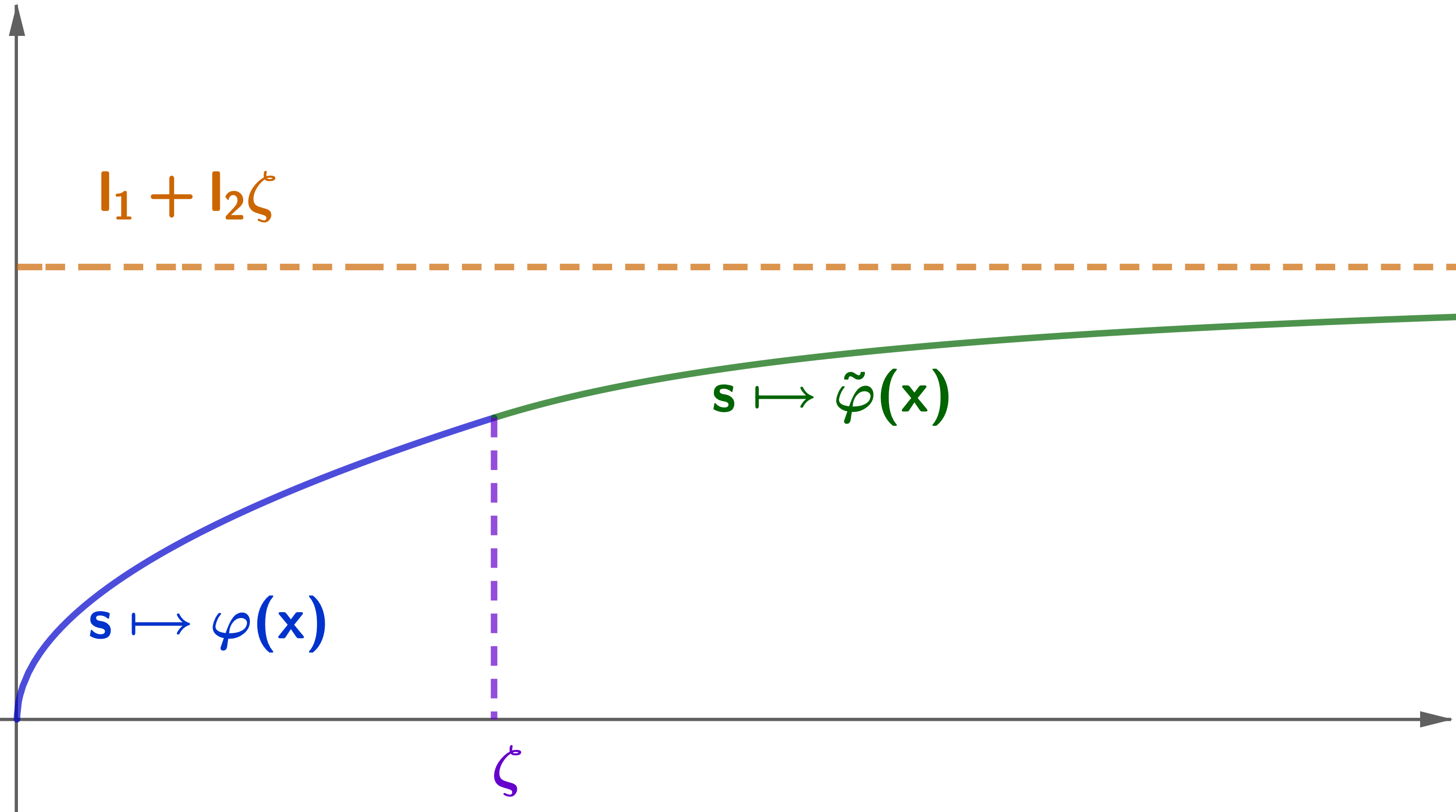}
    \caption{ Illustration of the proof of Proposition \ref{prop2_2}. $\widetilde{\varphi}$ (green curve) is the $C^1$ bounded extension over $(0,+\infty)$ of $s \in [0,\zeta) \mapsto \varphi(s) = \sqrt{s}$ (blue curve), as defined in \eqref{prop2_2}. 
    }
    \label{fig:KL_prolongement}
\end{figure}

\subsection{Limitation of the usual $\text{K\L}$   framework}
\label{sec3.1}

The $\text{K\L}$   property has been developed for both differentiable \cite{kurdyka1998gradients} and non-differentiable \cite{bolte2007clarke,attouch2010proximal} functions, to be used to study gradient descent and proximal-based algorithms, respectively, in a deterministic context (see e.g., \cite{Bolte2014,chouzenoux2014variable}).  
In this section, we focus on the differentiable $\text{K\L}$   framework, generalized to be used for stochastic schemes. Note that this focus is made mainly for the sake of simplicity, but a similar study could be done for the non-smooth framework by  replacing the gradient by the limiting subdifferential \cite{attouch2010proximal}. Section~\ref{sec5} will be dedicated to examples where the proposed theoretical differentiable framework can be used.

\begin{definition}\label{def_KL}\emph{[$\text{K\L}$   Property]}
A differentiable function $f : \Hc \to \Rbb$  satisfies the \emph{Kurdyka-Lojasiewicz ($\text{K\L}$  ) property} at $\widetilde{\xb} \in \Hc$, if there exist a neighbourhood $\Vc$ of $\widetilde{\xb}$, $\zeta > 0$ and $\varphi \in \Phi_{\zeta}$ such that 
$$\| \nabla f(\xb)\|_{\Hc}~\varphi'(f(\xb)-f(\widetilde{\xb})) \ge 1,$$ 
for every $\xb \in  \Vc $ satisfying $ 0 < f(\xb)-f(\widetilde{\xb}) < \zeta$. 
Furthermore, $f$ is said to satisfy the $\text{K\L}$   property over $E\subset \Hc$ if the latter is verified at every point of $E$. 
\end{definition}

The $\text{K\L}$   property given in Definition~\ref{def_KL} is defined locally in the sense that parameter $\zeta$ and function $\varphi$ depend on the point we are located. As a consequence, this naturally tends to favour trajectory-by-trajectory strategies as investigated, e.g., in \cite{milzarek2023convergence}. 
Instead, we aim to adopt a more global approach, as proposed in \cite[Lemma 6]{Bolte2014}, leading to the ``uniformized $\text{K\L}$   property".

\begin{theorem}\label{th_uniform_KL} \emph{[Uniformized $\text{K\L}$   property]}
Let $C$ be a compact subset of $\Hc$ and $f \colon \Hc \to \Rbb$ be a differentiable function constant on $C$, satisfying the $\text{K\L}$   property over $C$. Then, there exist $(\varepsilon, \zeta) \in (0,+\infty)^2$ and $\varphi \in \Phi_{\zeta}$ such that 
$$\label{eq:ineqKL_uniform}
     \| \nabla f(\xb)\|_{\Hc}~\varphi'(f(\xb)-f(\overline{\xb})) \geq 1,
$$
for all $\overline{\xb} \in C$ and $\xb\in \Hc$ verifying $\dist_{\Hc}(\xb,C)<\varepsilon$ and $0<f(\xb)-f(\overline{\xb}) <\zeta$.
\end{theorem}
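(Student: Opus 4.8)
The plan is to upgrade the pointwise KL inequality of Definition~\ref{def_KL} to a uniform one by a compactness argument, exploiting crucially that $f$ is constant on $C$. First I would apply the KL property at every point $\overline{\xb}\in C$: this yields, for each such point, an open neighbourhood $V_{\overline{\xb}}$, a scalar $\zeta_{\overline{\xb}}>0$ and a desingularising function $\varphi_{\overline{\xb}}\in\Phi_{\zeta_{\overline{\xb}}}$ for which the local inequality holds on $V_{\overline{\xb}}$. The family $\{V_{\overline{\xb}}\}_{\overline{\xb}\in C}$ is an open cover of the compact set $C$, so I would extract a finite subcover indexed by $\overline{\xb}_1,\dots,\overline{\xb}_p$, with associated data $\zeta_1,\dots,\zeta_p$ and $\varphi_1,\dots,\varphi_p$, and set $U=\bigcup_{i=1}^p V_{\overline{\xb}_i}\supset C$ and $\zeta=\min_{1\le i\le p}\zeta_i>0$.

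Next I would produce the radius $\varepsilon$, which is the main technical step. Since $U$ is open and contains the compact set $C$, there exists $\varepsilon>0$ such that the tube $\{\xb\in\Hc \mid \dist(\xb,C)<\varepsilon\}$ is contained in $U$. This follows from a routine contradiction argument: were it to fail, one could pick $\xb_n\notin U$ with $\dist(\xb_n,C)<1/n$; these points remain in a bounded region, so a subsequence converges to some $\xb_\star$, which lies in $C$ (as $C$ is closed) and hence in $U$, contradicting $\xb_n\in U^{c}$ (a closed set). I would then assemble a single desingularising function: using Proposition~\ref{prop3}, I extend each $\varphi_i$ to $\widetilde{\varphi}_i\in\Phi_{+\infty}$ and define $\varphi=\sum_{i=1}^p\widetilde{\varphi}_i$. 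The defining properties of $\Phi_{\zeta}$ are stable under finite summation: $\varphi(0)=0$, $\varphi$ is continuous at $0$, concave as a sum of concave functions, of class $C^1$ on $(0,\zeta)$, and $\varphi'=\sum_i\widetilde{\varphi}_i'>0$ since each summand has positive derivative; hence $\varphi\in\Phi_\zeta$.

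Finally I would verify \eqref{eq:ineqKL_uniform}, and here the hypothesis that $f$ is constant on $C$ is the conceptual crux: writing $f\equiv c$ on $C$, the shift $f(\xb)-f(\overline{\xb})=f(\xb)-c$ does not depend on the chosen $\overline{\xb}\in C$, so the desingularising function is always evaluated at the same argument. Given $\xb$ with $\dist(\xb,C)<\varepsilon$ and $0<f(\xb)-c<\zeta$, the tube inclusion provides an index $i_0$ with $\xb\in V_{\overline{\xb}_{i_0}}$; since $f(\xb)-f(\overline{\xb}_{i_0})=f(\xb)-c\in(0,\zeta)\subseteq(0,\zeta_{i_0})$, the local KL inequality gives $\|\nabla f(\xb)\|\,\varphi_{i_0}'(f(\xb)-c)\ge 1$. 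Because $\widetilde{\varphi}_{i_0}'=\varphi_{i_0}'$ on $(0,\zeta_{i_0})$ and all remaining summands have positive derivative, one has $\varphi'(f(\xb)-c)\ge\varphi_{i_0}'(f(\xb)-c)$, whence $\|\nabla f(\xb)\|\,\varphi'(f(\xb)-c)\ge 1$, which is exactly the claimed uniform inequality. I expect the tube-inclusion step to require the most care, but the genuinely essential ingredient is the constancy of $f$ on $C$, without which the argument of $\varphi'$ could not be decoupled from the choice of $\overline{\xb}$.
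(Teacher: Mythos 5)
Your proof is correct. Note that the paper does not actually prove Theorem~\ref{th_uniform_KL}: it imports it from the literature (Lemma~6 of the cited work of Bolte et al.) and only proves the extension, Theorem~\ref{th_uniform_KL_bis}. Your argument is essentially the standard proof of that cited lemma: pointwise KL at each point of $C$, a finite subcover, a tube $\{\dist(\cdot,C)<\varepsilon\}$ inside the union of the finitely many neighbourhoods, and a single desingularising function obtained by summing the local ones -- and you correctly isolate the constancy of $f$ on $C$ as the reason the argument of $\varphi'$ decouples from $\overline{\xb}$. It also mirrors the technique the paper itself uses for Theorem~\ref{th_uniform_KL_bis} (summing the $\varphi_i$, taking minima of the $\zeta_i$ and of the radii, and using positivity of the remaining derivatives to drop all but one summand). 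All the individual steps check out: the tube-inclusion contradiction argument is valid because $\Hc$ is finite-dimensional and $C$ is compact, and the sum of the $\widetilde{\varphi}_i$ inherits every defining property of $\Phi_{+\infty}$. One cosmetic remark: invoking Proposition~\ref{prop3} to extend each $\varphi_i$ to $[0,+\infty)$ is not needed here -- since $\zeta=\min_i\zeta_i$, you could simply restrict each $\varphi_i$ to $[0,\zeta)$ and sum; the extension only becomes useful later in the paper, when a single $\varphi\in\Phi_{+\infty}$ is required in the stochastic KL statement of Proposition~\ref{prop_stoKL}.
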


\smallbreak

We can notice that the sets $C$ in Theorem~\ref{th_uniform_KL} and $\Gamma$ in Assumption~\ref{ass:H2} are both partially characterized by their image through $f$ and $V$, respectively.
This similarity suggests that Theorem~\ref{th_uniform_KL} can be applied to function $V$, assuming that it satisfies the $\text{K\L}$   property on $ C = \Gamma$. 
However, Theorem~\ref{th_uniform_KL} might be over restrictive as it would require that $V(\Gamma)$ is reduced to a singleton. 
Since $\Gamma$ links all trajectories $\big( \chi_\infty(\omega) \big)_{\omega\in \Omega}$, the singleton assumption would require that the image sets $\big( V(\chi_\infty(\omega)) \big)_{\omega\in \Omega}$ are themselves all reduced to a common singleton (i.e. independent from $\omega$-variable), that is not amenable to working in a non-convex context. 
Hence, the uniformized $\text{K\L}$   property given in Theorem~\ref{th_uniform_KL} can be applied to each $\chi_\infty(\omega)$, for $\omega \in \Omega$, but then the associated parameters $\zeta$ and $\epsilon$, as well as the desingularizing function $\varphi$ would all also depend on $\omega$. Such an approach would again require working using a trajectory-by-trajectory strategy. \\
As emphasized in the Introduction, in this work we aim to move away from such a strategy and develop a simplified framework to study a.s. convergence of stochastic processes.
Specifically, in the remainder of this section, we build a new version of Theorem~\ref{th_uniform_KL} such that (i) the image of $C$ by $f$ shall not be constant anymore, and (ii) $\zeta,\epsilon, \varphi$ shall remain deterministic (i.e., independent from $\omega$).
The $\omega$-dependency issue of these three last parameters has already been considered in \cite{driggs2020spring}. In their work, the authors assume a mini-batch structure of the gradient-noise and give a version in expectation of Theorem~\ref{th_uniform_KL}. 
In contrast, we aim here to provide a version of Theorem~\ref{th_uniform_KL} that is independent from the considered stochastic scheme.

\subsection{Proposed extension of the uniformized $\text{K\L}$   theorem}
\label{sec3.2}

In this section, we will give our extended version of Theorem~\ref{th_uniform_KL} to the case where the set $C$ can be taken as a finite union of compact sets on which $f$ is constant. 
This result can be seen as a combination of the uniform $\text{K\L}$   property and \cite[Corollary 11]{bolte2007clarke}, for a more generic set $C$. 
Such an extension will be necessary to provide a framework where the $\text{K\L}$   property can be used independently from the considered stochastic scheme.

\begin{theorem}\label{th_uniform_KL_bis} \emph{[Extended uniformized $\text{K\L}$   property]}
Let $C=\bigcup_{i=1}^I C_i$ be a union of $I \in \Nbb^*$ non-empty disjoint and compact subsets $(C_i)_{1 \le i \le I}$ of $\Hc$, and $f \colon \Hc \rightarrow \Rbb$ be a differentiable function satisfying the $\text{K\L}$   property on $C$. 
We also suppose that $f$ is constant on every $C_i$, for $ i \in \{1, \ldots, I\}$, with respective values $f_{C_1}, \ldots, f_{C_I}$. 
Then, there exist $(\varepsilon, \zeta) \in (0, +\infty)^2$ and $\varphi \in \Phi_{\zeta}$ such that for every $i \in \{ 1,\ldots, I \}$, and for every $\xb\in \Hc$ verifying $\dist_{\Hc}(\xb,C)<\varepsilon$ and $0<f(\xb)-f_{C_i} <\zeta$, we have
\begin{equation}\label{eq:ineqKL_uniform_ext}
    \|\nabla f(\xb)\|\varphi'(f(\xb)-f_{C_i}) \geq 1. 
\end{equation}
\end{theorem}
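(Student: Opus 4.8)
The plan is to reduce the statement to the single-compact-set version, Theorem~\ref{th_uniform_KL}, applied to each $C_i$ in turn, and then to glue the resulting data together into uniform constants $(\varepsilon,\zeta)$ and a single desingularizing function $\varphi$. First I would apply Theorem~\ref{th_uniform_KL} to each compact set $C_i$, on which $f$ is constant and satisfies KL (since $C_i \subset C$): this yields, for every $i\in\{1,\dots,I\}$, constants $\varepsilon_i,\zeta_i>0$ and $\varphi_i\in\Phi_{\zeta_i}$ with $\|\nabla f(\xb)\|\,\varphi_i'(f(\xb)-f_{C_i})\ge 1$ whenever $\dist(\xb,C_i)<\varepsilon_i$ and $0<f(\xb)-f_{C_i}<\zeta_i$. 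Setting $\zeta\le\min_i\zeta_i$ and $\varphi=\sum_{i=1}^I\varphi_i$ on $[0,\zeta)$, one checks directly that $\varphi\in\Phi_\zeta$ (a finite sum of concave $C^1$ functions vanishing at $0$ with positive derivative), and crucially $\varphi'(s)=\sum_j\varphi_j'(s)\ge\varphi_i'(s)$ for every $i$ and every $s\in(0,\zeta)$, since each $\varphi_j'>0$. Thus it suffices to establish, for each admissible pair $(\xb,i)$, the per-component inequality with $\varphi_i$ in place of $\varphi$.

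The difficulty is that the hypothesis controls $\dist(\xb,C)$, the distance to the whole union, whereas Theorem~\ref{th_uniform_KL} for index $i$ requires the stronger $\dist(\xb,C_i)<\varepsilon_i$: a point close to $C$ need only be close to \emph{some} component, not to the specific $C_i$ appearing in the inequality. To bridge this gap I would exploit the disjointness and compactness of the $C_i$. Let $\rho=\tfrac12\min_{i\ne j}\dist(C_i,C_j)>0$; since $\Hc$ is finite dimensional, the closed unit neighbourhood of the compact set $C$ is compact, so $f$ is uniformly continuous there. Let $\gamma=\min\{\,|f_{C_i}-f_{C_j}| : f_{C_i}\ne f_{C_j}\,\}$ (with $\gamma=+\infty$ if all values coincide). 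I then fix $\zeta<\gamma/2$ (and $\zeta\le\min_i\zeta_i$) and choose $\varepsilon\le\min(\rho,\min_i\varepsilon_i)$ small enough that $\dist(\xb,C_{i_0})<\varepsilon$ forces $|f(\xb)-f_{C_{i_0}}|<\gamma/2$, where $C_{i_0}$ denotes a nearest component.

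With these constants the case analysis closes as follows. Fix $\xb$ with $\dist(\xb,C)<\varepsilon$ and let $i_0$ realize $\dist(\xb,C)=\dist(\xb,C_{i_0})$; because $\varepsilon\le\rho$, this nearest component is unique and $\dist(\xb,C_j)>\rho$ for all $j\ne i_0$. Now take any index $i$ with $0<f(\xb)-f_{C_i}<\zeta$. If $i=i_0$, then $\dist(\xb,C_i)<\varepsilon\le\varepsilon_i$ and $0<f(\xb)-f_{C_i}<\zeta\le\zeta_i$, so the per-component inequality holds directly. If $i\ne i_0$ but $f_{C_i}=f_{C_{i_0}}$, then $f(\xb)-f_{C_i}=f(\xb)-f_{C_{i_0}}$, so the inequality for $i$ is \emph{identical} to the one for $i_0$, which holds by the previous case. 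Finally, the case $i\ne i_0$ with $f_{C_i}\ne f_{C_{i_0}}$ cannot occur: combining $|f(\xb)-f_{C_{i_0}}|<\gamma/2$ with $0<f(\xb)-f_{C_i}<\zeta<\gamma/2$ yields $|f_{C_i}-f_{C_{i_0}}|<\gamma$, contradicting the definition of $\gamma$. In every admissible case we obtain $\|\nabla f(\xb)\|\,\varphi_i'(f(\xb)-f_{C_i})\ge 1$, and since $\varphi'\ge\varphi_i'$ the claimed inequality~\eqref{eq:ineqKL_uniform_ext} follows. The hard part, and the only genuinely new point beyond Theorem~\ref{th_uniform_KL}, is precisely this decoupling of the global distance constraint from the per-component one, which is resolved by the interplay between the geometric separation $\rho$ and the functional separation $\gamma$.
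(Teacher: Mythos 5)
Your proof is correct and follows essentially the same strategy as the paper: apply the single-set uniformized KL theorem to each $C_i$, take $\varphi=\sum_i\varphi_i$ so that $\varphi'\ge\varphi_i'$, and use the continuity of $f$ together with the separation of the values $f_{C_i}$ to force the index $i$ in the hypothesis to match the component the point is actually near. The only notable difference is cosmetic: you identify the unique nearest component via the geometric gap $\rho$ and treat coincident values $f_{C_i}=f_{C_j}$ explicitly, whereas the paper eliminates the other components by a functional-separation argument after assuming without loss of generality that the values $f_{C_1},\ldots,f_{C_I}$ are pairwise distinct.
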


\vspace{0.4cm}

\begin{proof}
Without loss of generality we consider that $f_{C_1}\neq \cdots \neq f_{C_I}$. 

We start by applying the uniformized $\text{K\L}$   property \ref{th_uniform_KL} to $C_1,\ldots,C_I$. 
For every $i\in \{ 1,\ldots, I \}$, there exist $(\varepsilon_i,\zeta_i) \in (0, +\infty)^2$ and $\varphi_i\in \Phi_{\zeta_i}$ such that for every $\xb\in \Hc$ verifying $\dist_{\Hc}(\xb,C_i)<\varepsilon_i$ and $0<f(\xb)-f_{C_i}<\zeta_i $ we have 
\begin{equation}\label{AltUniKL_1}
    \|\nabla f(\xb) \| \varphi_i'(f(\xb)-f_{C_i})\geq 1.
\end{equation}
Let $\widetilde{\zeta} = \upsilon \min_{j\neq i}|f_{C_i}-f_{C_j}|$ with $\upsilon \in (0, 1/2)$. We have $\widetilde{\zeta}>0$. 
In addition, using the continuity of $f$, for every $i\in \{ 1,\ldots, I \}$, there exists $\widetilde{\varepsilon}_i\in(0, \varepsilon_i)$ such that, for every $\xb$ satisfying $\dist_{\Hc}(\xb,C_i)<\widetilde{\varepsilon}_i$, we have
\begin{equation}\label{AltUniKL_2}
|f(\xb)-f_{C_i}|< \widetilde{\zeta}.
\end{equation}
Let $\delta \in (0,1)$ and $\varepsilon = \delta \min_{1\leq i \leq I} \widetilde{\varepsilon}_i$. Then
\begin{equation}\label{AltUniKL_3}
   \left\{\xb\in \Hc \, \mid \, \dist_{\Hc}(\xb,C)< \varepsilon \right\} \subset \bigcup_{i=1}^{I} \left\{\xb\in \Hc\, \mid \, \dist_{\Hc}(\xb,C_i)< \widetilde{\varepsilon}_i \right\} .
\end{equation}

We now show that \eqref{eq:ineqKL_uniform_ext} is satisfied for $\varepsilon$ as defined above, $\zeta=\min(\zeta_1,\ldots\zeta_I,\widetilde{\zeta})$ and $\varphi=\sum_{i=1}^I \varphi_i \in \Phi_{\zeta}$.
Let $\xb\in \Hc$ and $i\in \{ 1,\ldots, I \}$ be such that $\dist_{\Hc}(\xb,C)< \varepsilon$ and $0<f(\xb)-f_{C_i}< \zeta$. 
For every $j\in \{1, \ldots, I\} \setminus \{i\}$, since $\zeta\leq \widetilde{\zeta}$, using the definition of $\widetilde{\zeta}$, we have $0<|f(\xb)-f_{C_i}|\leq v |f_{C_i}-f_{C_j}|$.
Insofar $v\in (0,1/2)$, we obtain
\begin{align}
    | f(\xb)-f_{C_j} |
    &   = | (f(\xb)-f_{C_i}) + (f_{C_i} - f_{C_j}) | \notag \\
    &   \geq  \left|~|f_{C_i}-f_{C_j}| - |f(\xb)-f_{C_i}|~\right| \notag \\
    &= |f_{C_i}-f_{C_j}| - |f(\xb)-f_{C_i}| \notag \\
    &   \geq (1-v) |f_{C_i}-f_{C_j}| \notag \\
     &   > v |f_{C_i}-f_{C_j}|\notag \\
    &   \geq \widetilde{\zeta}.
\end{align}
Then $\dist_{\Hc}(\xb,C_j) \geq \widetilde{\varepsilon_j}$, otherwise this contradicts the continuity property of $f$ in \eqref{AltUniKL_2}. 
So, as $\dist_{\Hc}(\xb,C)< \varepsilon$, and due to \eqref{AltUniKL_3}, this necessarily implies that 
$\dist_{\Hc}(\xb,C_i)< \widetilde{\varepsilon}_i$ and thus $\dist_{\Hc}(\xb,C_i)< \varepsilon_i$ (since $\widetilde{\varepsilon_i} \leq \varepsilon_i)$. 
In addition, since $\zeta \leq \zeta_i$, we have $0<f(\xb)-f_{C_i}< \zeta_i$, and we can apply once more the uniformized $\text{K\L}$   property at $C_i$ (i.e., \eqref{AltUniKL_1} is verified). Finally the positivity of $\varphi_1',\ldots,\varphi_I'$ ensure the desired relation as follows
\begin{align*}
    \|\nabla f(\xb) \| \varphi'(f(\xb)-f_{C_i})
    &=  \|\nabla f(\xb) \| \sum_{j=1}^I \varphi_j'(f(\xb)-f_{C_i}) \nonumber \\
    &\geq  \|\nabla f(\xb) \|  \varphi_i'(f(\xb)-f_{C_i}) \geq 1 .
\end{align*}
This completes the proof.
\end{proof}

\subsection{A $\text{K\L}$   framework for stochastic schemes}
\label{sec3.3}

The objective of this section is to introduce a new framework derived from our extended uniformized $\text{K\L}$   property (Theorem~\ref{th_uniform_KL_bis}), that can be used in a stochastic setting, independently from the considered stochastic scheme (in particular removing the $\omega$-dependency that appears if using directly Theorem~\ref{th_uniform_KL}). 
To this aim we need to work under the following assumption.

\begin{assumption}\label{ass:H3}\
\begin{enumerate}
    \item \label{ass:H3:i}
    The function $V$ defined in Assumption~\ref{ass:H1} satisfies the $\text{K\L}$   property (on $\Hc$).
    \item \label{ass:H3:ii}
    There exists a positive integer $I<+\infty$ such that the set $\Gamma$ defined in Assumption~\ref{ass:H2}\ref{ass:H2:ii} satisfies $\Gamma=\bigcup_{i=1}^I C_i $, where, for every $i\in \{1, \ldots, I\}$, $C_i$ is a compact set on which $V$ is constant on. 
\end{enumerate}
\end{assumption}

Note that assuming that $V$ satisfies the $\text{K\L}$   property on $\Hc$ (i.e., Assumption~\ref{ass:H3}\ref{ass:H3:i}) is common in non-convex optimization. As emphasized, e.g., in \cite{bolte2008characterizations}, the $\text{K\L}$   inequality is satisfied for a wide class of functions, and in particular by real analytic, semi-algebraic\footnote{A function is semi-algebraic if its graph is a finite union of sets defined by a finite number of polynomial inequalities.} and log-exp functions.

\begin{proposition}\label{prop_stoKL}
Assume that  Assumptions \ref{ass:H1}, \ref{ass:H2} and \ref{ass:H3} hold and assume that the event 
\begin{equation}\label{def:Pi}
\Pi_V := \liminf\limits_{k\to +\infty}\left\{\omega \in \Omega \, \mid \, V(\xv_k(\omega))>\VD_{\infty}(\omega) \right\}.
\end{equation} 
has probability one, i.e. $\Pbb(\Pi_V)=1$.
Then there exist a bounded function $\varphi \in \Phi_{+\infty}$ and an a.s. finite positive discrete random variable $\KD$ such that 
\begin{equation}
(\forall k > \KD)\quad
\|\nabla V(\xv_k)\|  \varphi'(V(\xv_k) - \VD_{\infty})\geq 1~~\mathrm{a.s.}.
\end{equation}
\end{proposition}

\vspace{0.2cm}

\begin{proof}
The cornerstone of the proof relies in the application of Theorem~\ref{th_uniform_KL_bis} on set
\begin{equation}\label{stoKL_1}
    \mathcal{C}:= \bigcup_{\omega \in \Theta\cap \Pi_V} \chi_\infty(\omega),
\end{equation}
where 
\begin{equation} \label{stoKL_1a}
 \Theta = \Big\{V(\xv_k) \underset{k\to +\infty}{\to} \VD_{\infty}\Big\} \bigcap  \Big\{\VD_{\infty}\in V(\Gamma)\Big\}
 \bigcap \Big\{\dist_{\Hc}(\xv_k,\chi_\infty)\underset{k\to +\infty}{\to} 0 \Big\}.
\end{equation}
Here, the role of $C$ is to bring together the accumulation points of all feasible trajectories of process $(\xv_k)_{k\in \Nbb}$. This will allow us to build a desingularization function which is uniform in $\omega$-variable.

First, by definition of $C$ \eqref{stoKL_1} and continuity of $V$, we have
\begin{equation*}
    V(\Cc)=V\left(\bigcup_{\omega \in \Theta\cap \Pi_V} \chi_\infty(\omega)\right)=\bigcup_{\omega \in \Theta\cap \Pi_V} V \left(\chi_\infty(\omega)\right).
\end{equation*}
Furthermore, since $V$ is continuous, by definition of $\VD_\infty$ and $\chi_\infty$ we have $V \left(\chi_\infty(\omega)\right) = \left \{ \VD_{\infty}(\omega)\right\} $, and hence
\begin{equation}\label{stoKL_1b}
    V(\Cc)= \bigcup_{\omega \in \Theta\cap \Pi_V} \left \{  \VD_{\infty}(\omega)\right\} 
    \subset \bigcup_{\omega \in \Theta\cap \Pi_V} V(\Gamma) = V(\Gamma),
\end{equation}
where the inclusion is obtained by~\eqref{stoKL_1a} and the last equality is due to the fact that $V(\Gamma)$ is constant.
Since $|V(\Gamma)|<+\infty$ (Assumption~\ref{ass:H2}\ref{ass:H2:i}), it follows from \eqref{stoKL_1b} that $|V(\Cc)|<+\infty$.
Then Proposition~\ref{prop0} ensures that $\overline{\Cc}$ can be written as a finite union of non-empty compacts on which $V$ is constant on.   

Since $V$ satisfies the $\text{K\L}$   property, we can apply Theorem \ref{th_uniform_KL_bis} considering $f=V$ and $C=\overline{\Cc}$. 
Let $I=|V(\overline{\Cc})|$ and $V(\overline{\Cc})=\{V_1, \ldots, V_I \}$ with $V_1\neq V_2 \cdots \neq V_I$. Then, there exist $\varepsilon_{\mathcal{C}}>0$, $\zeta_{\mathcal{C}}>0$ and $\varphi_{\mathcal{C}}\in \Phi_{\zeta_{\mathcal{C}}}$, such that, for every $\xb\in \Hc$ and $i\in I$ satisfying  
$\dist_{\Hc}(\xb,\overline{\Cc})<\varepsilon_{\Cc}$ and $0<V(\xb)-V_i <\zeta_{\mathcal{C}}$, 
\begin{equation}\label{stoKL_2}
    \| \nabla V(\xb)\|_{\Hc}~\varphi_{\mathcal{C}}'(V(\xb)-V_i) \geq 1.
\end{equation}
According to Proposition \ref{prop3}, $\varphi_{\mathcal{C}}$ has a bounded extension $\widetilde{\varphi}_{\mathcal{C}}$ belonging to $\Phi_{+\infty}$. Then, $\widetilde{\varphi}_{\mathcal{C}}$ also satisfies, for any $\xb\in \Hc$ and $i\in I$ such that  
$\dist_{\Hc}(\xb,\overline{\Cc})<\varepsilon_{\mathcal{C}}$ and $0<V(\xb)-V_i<\zeta_{\mathcal{C}}$, 
\begin{equation}\label{stoKL_2_bis}
     \| \nabla V(\xb)\|_{\Hc}~\widetilde{\varphi}_{\mathcal{C}}'(V(\xb)-V_i) \ge 1.
\end{equation}
We now introduce the positive discrete random variable
\begin{equation}\label{stoKL_3}
    \KD = \min \left\{l >0 \, \mid \, (\forall p\geq l) \; \dist_{\Hc}(\xv_p,\overline{\Cc})\leq \varepsilon_{\mathcal{C}} \text{ and } 0<V(\xv_p)-\VD_{\infty}<\zeta_{\mathcal{C}} \right\}.
\end{equation}
According to Assumption~\ref{ass:H2}\ref{ass:H2:i} and Proposition~\ref{prop1}, for every $\omega \in \Theta\cap \Pi_V$, $\KD(\omega)<+\infty$. 
In addition, for every $\omega \in \Theta\cap \Pi_V$, since $\VD_{\infty}(\omega)\in V(\overline{\Cc})=\{V_1, \ldots, V_I\}$, there exists a unique $i_{\omega} \in I$ such that $\VD_\infty(\omega) = V_{i_{\omega}}$. As a consequence, \eqref{stoKL_2_bis} finally leads to  
\begin{equation}
     (\forall \omega \in \Theta\cap \Pi_V)\quad (\forall k>\KD(w))\quad  \| \nabla V(\xb(\omega))\|_{\Hc}~\widetilde{\varphi}_{\mathcal{C}}'\left(V(\xv_k(\omega))-\VD_{\infty}(\omega)\right) \geq 1.
\end{equation}
Assumption \ref{ass:H2} and Proposition \ref{prop1} lead to $\Pbb(\Theta)=1$. Furthermore by assumption we have $\Pbb(\Pi_V)=1$. Combining the two leads to $\Pbb(\Theta\cap \Pi_V)=1$ and so concludes the proof.   
\end{proof}

\section{An almost sure convergence result based on $\text{K\L}$ theory}
\label{sec4}
This section is devoted to the implementation of our theoretical approach based on conditioning theory (similarly to~\cite{Bolte2014} in the deterministic case), combined with Proposition~\ref{prop_stoKL} from the previous section. 


In the following, we consider a slightly more restrictive assumption than Assumption~\ref{ass:H2}\ref{ass:H2:i}, given below.
\begin{assumption}\label{ass:H4}
Process $(V(\xv_k))_{k\in \Nbb}$ is a $\Fc_k$-supermartingale on some probability space $\left(\Omega, \Fc, \Pbb \right)$, given a filtration $(\Fc_k)_{k\in \Nbb}$.  
\end{assumption}
Note that if Assumptions~\ref{ass:H1} and \ref{ass:H4} are satisfied, then Assumption~\ref{ass:H2}\ref{ass:H2:i} holds.

In this section, we first give a technical result in Section~\ref{Ssec:sec4:1}, that will be used to apply the $\text{K\L}$   property in our main result given in Section~\ref{Ssec:sec4:2}.

\subsection{A useful technical lemma}\label{Ssec:sec4:1}
 
We introduce here a purely technical lemma which will be useful for establishing the central result of this section 

\begin{lemma}\label{lemma:sum}
Under Assumption \ref{ass:H1} and \ref{ass:H4},
\begin{enumerate}
\item \label{lemma:sum:1} Process $(V(\xv_k))_{k\in \Nbb}$ converges almost-surely to an integrable random variable $\VD_{\infty}$. 
\item \label{lemma:sum:2} Assume that, for all $k\in \Nbb$, either
\begin{equation}\label{lemma:sum:2:cond1}
    V(\xv_k)\geq \Ebb[\VD_{\infty}|\Fc_k]>\VD_{\infty} \text{ a.s.},
\end{equation}
or
\begin{equation}\label{lemma:sum:2:cond2}
    V(\xv_k)> \Ebb[\VD_{\infty}|\Fc_k]\geq \VD_{\infty} \text{ a.s.}
\end{equation}
Then for any bounded function $\varphi \in \Phi_{+\infty}$ 
\begin{align}
    & \Ebb[\XD_{\varphi}] \leq \Ebb[\varphi(V(\xv_0) - \VD_{\infty})] \label{eq:lemma:sum:1}, \\ & \text{where}\quad \XD_{\varphi} := \sum_{k=0}^{+\infty} \varphi'\left(V(\xv_k) - \VD_{\infty}\right) \left(V(\xv_k)- \Ebb[V(\xv_{k+1})|\Fc_k] \right)\label{eq:lemma:sum:2}.
\end{align}
\end{enumerate}
\end{lemma}

\begin{proof}
\begin{enumerate}
\item 
Since $V$ is lower-bounded as a coercive and continuous function (Assumption~\ref{ass:H1}), \ref{lemma:sum:1} holds as a direct consequence of the Doob's convergence theorem \cite{williams1991probability}. 
\smallbreak

\item 
Let $k\in \Nbb$.
By hypothesis, we have $V(\xv_k) - \VD_{\infty}>0$, $\Ebb[V(\xv_{k+1})|\Fc_k]) - \VD_{\infty} \ge 0$ and $V(\xv_k)-\Ebb[V(\xv_{k+1})|\Fc_k]) \ge 0$. Hence the three quantities $\varphi(V(\xv_k) - \VD_{\infty})$, $\varphi(\Ebb[V(\xv_{k+1})|\Fc_k]) - \VD_{\infty})$ and $\varphi'(V(\xv_k) - \VD_{\infty})$ are well defined. 
Further, the concavity of $\varphi$ leads to: 
\begin{multline}\label{proof:lemma:sum:1}
 \varphi(V(\xv_k) - \VD_{\infty}) - \varphi(\Ebb[V(\xv_{k+1})|\Fc_k]) - \VD_{\infty})   \\
    \geq \varphi'(V(\xv_k) - \VD_{\infty}) \left(V(\xv_k)-\Ebb[V(\xv_{k+1})|\Fc_k] \right)~~\mathrm{a.s.}. 
\end{multline}
Using the fact that $\VD_{\infty}\leq \Ebb[\VD_{\infty}|\Fc_k]$ and $\VD_{\infty}\leq V(\xv_{k+1})$ almost-surely, we can deduce that 
\begin{align}\label{proof:lemma:sum:2}
    &\Ebb[V(\xv_{k+1})|\Fc_k]-\VD_{\infty} \notag \\
    &\geq \Ebb[V(\xv_{k+1})|\Fc_k]-\Ebb[\VD_{\infty}|\Fc_k]= \Ebb[V(\xv_{k+1})-\VD_{\infty}|\Fc_k]\geq 0~~\mathrm{a.s.}.
\end{align}
Then, we can apply $\varphi$ to \eqref{proof:lemma:sum:2} (as an increasing function), and we obtain
\begin{equation*}
    \varphi\left(\Ebb[V(\xv_{k+1})|\Fc_k]-\VD_{\infty}\right)\geq \varphi \left(\Ebb[V(\xv_{k+1})-\VD_{\infty}|\Fc_k] \right)~~\mathrm{a.s.}.
\end{equation*}
Moreover, due to Jensen's inequality:
\begin{equation*}
    \varphi \left(\Ebb[V(\xv_{k+1})-\VD_{\infty}|\Fc_k] \right) \geq \Ebb\left[\varphi\left(V(\xv_{k+1})-\VD_{\infty}\right)|\Fc_k\right]~~\mathrm{a.s.}.
\end{equation*}
Hence combining the last two inequalities we get
\begin{equation}\label{proof:lemma:sum:3bisbis}
    \varphi\left(\Ebb[V(\xv_{k+1})|\Fc_k]-\VD_{\infty}\right)\geq \Ebb\left[\varphi\left(V(\xv_{k+1})-\VD_{\infty}\right)|\Fc_k\right]~~\mathrm{a.s.}.
\end{equation}
Injecting \eqref{proof:lemma:sum:3bisbis} in \eqref{proof:lemma:sum:1} gives:
\begin{multline}\label{proof:lemma:sum:4}
\varphi(V(\xv_k) - \VD_{\infty}) - \Ebb\left[\varphi\left(V(\xv_{k+1})-\VD_{\infty}\right)|\Fc_k\right]  \\
    \geq \varphi'(V(\xv_k) - \VD_{\infty}) \left(V(\xv_k)-\Ebb[V(\xv_{k+1})|\Fc_k] \right)~~\mathrm{a.s.}. 
\end{multline}
The two terms involved in \eqref{proof:lemma:sum:4} being non-negative, we are allowed to pass to expectation operator. Then, by linearity of the expectation (as $\varphi$ is bounded), we obtain 
\begin{multline}\label{proof:lemma:sum:5}
  \Ebb\left[\varphi(V(\xv_k) - \VD_{\infty})\right] - \Ebb\left[\varphi(V(\xv_{k+1}) - \VD_{\infty})\right]  \\
     \geq \Ebb\left[\varphi'(V(\xv_k) - \VD_{\infty}) \left(V(\xv_k)-\Ebb[V(\xv_{k+1})|\Fc_k] \right)\right]~~\mathrm{a.s.}. 
\end{multline}
On the one hand, by summing \eqref{proof:lemma:sum:5} for $ k \in \Nbb$, we have
\begin{align}\label{proof:lemma:sum:6}
  &\sum_{k=0}^{+\infty}\left(\Ebb\left[\varphi(V(\xv_k) - \VD_{\infty})\right] - \Ebb\left[\varphi(V(\xv_{k+1}) - \VD_{\infty})\right]\right) \notag \\
    &\qquad\quad 
        \geq \sum_{k=0}^{+\infty}\Ebb\left[\varphi'(V(\xv_k) - \VD_{\infty}) \left(V(\xv_k)-\Ebb[V(\xv_{k+1})|\Fc_k] \right)\right] \notag \\
    &\qquad\quad 
        =\Ebb\left[\sum_{k=0}^{+\infty}\varphi'(V(\xv_k) - \VD_{\infty}) \left(V(\xv_k)-\Ebb[V(\xv_{k+1})|\Fc_k] \right)\right],
\end{align}
where the last equality holds since $\varphi'\geq 0$.
\newline
On the other hand, since $V(\xv_k)\underset{k\to +\infty}{\to}{\VD_{\infty}}$ a.s. we have
\begin{equation*}
  \Ebb\left[\varphi(V(\xv_k) - \VD_{\infty})\right]\underset{k\to +\infty}{\to}\Ebb\left[\varphi(\VD_{\infty} - \VD_{\infty})\right]=0,
\end{equation*}
by virtue of the dominated convergence theorem (as $\varphi$ is bounded). 
Hence
\begin{equation}\label{proof:lemma:sum:7}
\sum_{k=0}^{+\infty}\left(\Ebb\left[\varphi(V(\xv_k) - \VD_{\infty})\right] - \Ebb\left[\varphi(V(\xv_{k+1}) - \VD_{\infty})\right]\right)
    =\Ebb\left[\varphi(V(\xv_0) - \VD_{\infty})\right].
\end{equation}
As a consequence, combining \eqref{proof:lemma:sum:7} and \eqref{proof:lemma:sum:6} gives 
\begin{align}
  &\Ebb\left[\varphi(V(\xv_0) - \VD_{\infty})\right] \notag \\
  & \geq \Ebb\left[\sum_{k=0}^{+\infty}\varphi'(V(\xv_k) - \VD_{\infty}) \left(V(\xv_k)-\Ebb[V(\xv_{k+1})|\Fc_k] \right)\right] = \Ebb[\XD_{\varphi}],
\end{align}
which concludes the proof.
\end{enumerate}
\end{proof}

Conditions \eqref{lemma:sum:2:cond1} and \eqref{lemma:sum:2:cond2} in Lemma~\ref{lemma:sum} may look technical. 
On the one hand, the left hand-side inequalities (of the form $ V(\xv_k)\geq \Ebb[\VD_{\infty}|\Fc_k]$) always hold in the context of supermartingales, as a direct consequence of Fatou's lemma. 
However, on the other hand, the right hand-side inequalities (of the form $\Ebb[\VD_{\infty}|\Fc_k]\geq \VD_{\infty}$) are more technical, and indicates that the estimation of $\VD_{\infty}$ with respect to $\Fc_k$, i.e. $\Ebb[\VD_{\infty}|\Fc_k]$, must remain greater than the true limit value $\VD_{\infty}$ (in other word, we must ``approach" this limit ``from above").

\subsection{Main result}\label{Ssec:sec4:2}

We are now ready for our main convergence theorem, that aims to provide a generic framework to show convergence of stochastic schemes in a differentiable non-convex context. To this aim, we need to combine Lemma~\ref{lemma:sum} with Proposition~\ref{prop_stoKL}.

\begin{theorem}\label{thKL}
Assume that Assumptions \ref{ass:H1}, \ref{ass:H2}\ref{ass:H2:ii}, \ref{ass:H3} and \ref{ass:H4} are verified for a probability space $(\Omega, \Fc, \Pbb)$ and a filtration $(\Fc_k)_{k\in \Nbb}$. 
Assume that, for every $k\in \Nbb$, $\| \nabla V(\xv_k)\|_{\Hc}>0$, and that either \eqref{lemma:sum:2:cond1} or \eqref{lemma:sum:2:cond2} holds, for $\VD_{\infty}$ the a.s. limit defined in Lemma~\ref{lemma:sum}\ref{lemma:sum:1}.
Then the following hold.
\begin{enumerate}
\item 
There exist $\varphi\in \Phi_{+\infty}$ and an almost-sure finite discrete positive random variable $\KD$ such that the following inequality holds:
\begin{equation}\label{thKL:inequality}
    \Ebb\left[\DD\right ]\leq \Ebb\left[\varphi(V(\xv_0) - \VD_{\infty})\right] \text{ where }
    \DD:=\sum_{k=\KD+1}^{+\infty} \frac{V(\xv_k)-\Ebb[V(\xv_{k+1})|\Fc_k]}{\| \nabla V(\xv_k)\|_{\Hc}}.
\end{equation}

\item We have
\begin{equation}\label{cor:thKL:eq}
    \sum_{k=0}^{+\infty} \frac{V(\xv_k)-\Ebb[V(\xv_{k+1})|\Fc_k]}{\| \nabla V(\xv_k)\|_{\Hc}}<+\infty~~\mathrm{a.s.}.
\end{equation}
\end{enumerate}
\end{theorem}

\vspace{0.2cm}

\begin{proof}
\begin{enumerate}
\item 
Let $\Pi_V$ defined as in \eqref{def:Pi}. By hypothesis (\eqref{lemma:sum:2:cond1} or \eqref{lemma:sum:2:cond2}), this is 
a probability-one set. Thus Proposition \ref{prop_stoKL} ensures the existence of $\varphi\in \Phi_{+\infty}$ and an almost-sure finite random variable $\KD$ such that
\begin{equation}\label{thKL:pr:eq-KL}
    (\forall k\geq \KD)\quad \| \nabla V(\xv_k)\|_{\Hc}~ \varphi'\left(V(\xv_k)-\VD_{\infty}\right)\geq 1~~\mathrm{a.s.}.
\end{equation}
Let $\XD_{\varphi}$ be the variable defined in \eqref{eq:lemma:sum:2}. Using~\eqref{thKL:pr:eq-KL}, we have
\begin{align*}
    \XD_{\varphi} &=\sum_{k=0}^{\KD} \varphi'\left(V(\xv_k) - \VD_{\infty}\right) \left(V(\xv_k)- \Ebb[V(\xv_{k+1})|\Fc_k] \right) \notag \\ 
    &\qquad\quad
        + \sum_{k=\KD+1}^{+\infty} \varphi'\left(V(\xv_k) - \VD_{\infty}\right) \left(V(\xv_k)- \Ebb[V(\xv_{k+1})|\Fc_k] \right)\notag \\
    &\geq\sum_{k=0}^K \varphi'\left(V(\xv_k) - \VD_{\infty}\right) \left(V(\xv_k)- \Ebb[V(\xv_{k+1})|\Fc_k] \right) \notag \\
    & \qquad\quad  + \sum_{k=\KD+1}^{+\infty}\frac{V(\xv_k)-\Ebb[V(\xv_{k+1})|\Fc_k]}{\| \nabla V(\xv_k)\|_{\Hc}} \notag \\
    &\geq \sum_{k=\KD+1}^{+\infty}\frac{V(\xv_k)-\Ebb[V(\xv_{k+1})|\Fc_k]}{\| \nabla V(\xv_k)\|_{\Hc}}~~\mathrm{a.s.},
\end{align*}
where the last inequality is obtained using the fact that $\varphi'$ is positive and Assumption~\ref{ass:H4}.
Passing to the expectation (all quantities involved are positives almost-surely) and using Lemma~\ref{lemma:sum} leads to
\begin{equation}\label{thKL:pr:eq-sum}
    \Ebb\left[\sum_{k=\KD+1}^{+\infty}\frac{V(\xv_k)-\Ebb[V(\xv_{k+1})|\Fc_k]}{\| \nabla V(\xv_k)\|_{\Hc}}\right] \leq \Ebb[\XD_{\varphi}] \leq \Ebb\left[\varphi(V(\xv_0) - \VD_{\infty})\right].
\end{equation}

\item 

Because $\Ebb[\XD_{\varphi}]$ is finite (according to \eqref{thKL:pr:eq-sum}) we have
$$\sum\limits_{k=\KD+1}^{+\infty} \frac{V(\xv_k)-\Ebb[V(\xv_{k+1})|\Fc_k]}{\| \nabla V(\xv_k)\|_{\Hc}}<+\infty~~\mathrm{a.s.}.$$ 
On event $\{\KD<+\infty\}$, of probability one, we thus have
\begin{align*}
    &\sum_{k=0}^{+\infty} \frac{V(\xv_k)-\Ebb[V(\xv_{k+1})|\Fc_k]}{\| \nabla V(\xv_k)\|_{\Hc}} \notag \\
    &= \sum_{k=0}^{\KD} \frac{V(\xv_k)-\Ebb[V(\xv_{k+1})|\Fc_k]}{\| \nabla V(\xv_k)\|_{\Hc}} \notag \\
    &\qquad\quad + \sum_{k=\KD+1}^{+\infty} \frac{V(\xv_k)-\Ebb[V(\xv_{k+1})|\Fc_k]}{\| \nabla V(\xv_k)\|_{\Hc}}
    <+\infty~~\mathrm{a.s.}. 
\end{align*}
This concludes the proof. 
\end{enumerate}
\end{proof}

Theorem \ref{thKL} ensures the existence of an upper-bound for the expectation of the variable $\DD$ (see \eqref{thKL:inequality}) which can interpreted as follows.
The residual of the descent condition (i.e. $V(\xv_k)-\Ebb[V(\xv_{k+1})|\Fc_k]$) is expected to be ``very small" compared to $\|V(\xv_k)\|_{\Hc}$ for $k > \KD$. 
When comparing with previous works in the deterministic setting \cite{attouch2010proximal,chouzenoux2014variable,chouzenoux2023convergence}, it is consistent to have an upper-bound depending on the desingularization function $\varphi$. In the stochastic case, variable $\KD$ plays a role similar to a stopping time \footnote{Note that $\KD$ is not a stopping time in the purely mathematical sense since, as seen in the proof of Proposition~\ref{prop_stoKL}, it involves the variable $\VD_{\infty}$ which is not $\Fc_k$-measurable.} that appears to be consistent with \cite[Theorem 2.2]{tadic2015convergence}.

\section{Building  $\Hc$ space and $V$ function for generic SGD. }
\label{sec5}

In the remainder of this work, we will show how the results presented in the previous sections (in particular Theorem~\ref{thKL}) can be used to establish convergence guarantees of stochastic approximation schemes for solving problem~\eqref{eq:pb}. 

For the sake of simplicity, we will focus on the asymptotic convergence of a generic stochastic gradient descent (SGD) scheme, with preconditioning. Specifically, we define
\begin{equation}\label{eq:scheme:grad_sto}
    (\forall k \in \Nbb)\quad \wv_{k+1} = \wv_k - \alpha_k \Uv_k \fv_k
\end{equation}
where 
$(\alpha_k)_{k\in \Nbb}$ is a sequence of positive step-sizes, 
$(\fv_k)_{k\in \Nbb}$ is a process of directions belonging to $\eR^N$ aiming at approximating the gradients $\left( \nabla F(\wv_k)\right)_{k\in \Nbb}$,
and $(\Uv_k)_{k\in \Nbb}$ is a process of preconditioning matrices in $\Rbb^{N \times N}$. 
Scheme~\eqref{eq:scheme:grad_sto} encompasses the most usual second order methods of SGD \cite{bottou2018optimization} and notably, for the non-convex setting, some Newton/Quasi-Newton versions as those of \cite{mokhtari2014res,wang2017stochastic}. 
Then, our analysis will rely on the finding of a finite-dimensional Euclidean space $\Hc$, a process $(\xv_k)_{k\in \Nbb}$ and a Lyapunov-type function $V \colon \Hc \to \Rbb$ associated with the problem of interest \eqref{eq:pb}, that will enable stronger convergence guarantees on $(\wv_k)_{k\in \Nbb}$ than classical non-convex studies.

\subsection{Assumptions}
\label{subsec:setting_assumpts}

We consider a probabilistic space $(\Omega, \Fc, \Pbb)$ with the canonical filtration $\Fc_0=\{ \emptyset, \Omega\}$ and for all $k\geq 1~\Fc_k=\sigma(\fv_0,\wv_1,...,\fv_{k-1},\wv_k)$ the smallest $\sigma$-algebra gathering all the past information from zero to the current state $k$. We consider the following assumptions:

\begin{assumption}~
\label{ass:H:5}
\begin{enumerate}
\item \label{ass:H:5:i} 
$F$ is coercive and $\beta$-Lipschitz differentiable i.e. for every $(\wb,\wb') \in \eR^N \times \eR^N$, $\| \nabla F(\wb)- \nabla F(\wb')\| \leq \beta \|\wb- \wb' \|$.
\item \label{ass:H:5:ibis} 
The image of $\zer \nabla F$ by $F$ satisfies $|F(\zer \nabla F)|<+\infty$.
\item \label{ass:H:5:vi}
$\left(F(\wv_k)\right)_{k\in \Nbb}$ and $\left(\nabla F(\wv_k)\right)_{k\in \Nbb}$ are integrable processes.
\item \label{ass:H:5:ii} 
$(\Uv_k)_{k\in \Nbb}$ is $\Fc_k$-measurable, made of symmetric matrices and there exist $\underline{\nu},\overline{\nu}>0$
 such that:
\begin{equation*}
 (\forall k\in \Nbb)\quad \underline{\nu}\|\fv_k\|^2\leq \langle \fv_k, \Uv_k \fv_k\rangle \leq  \overline{\nu}\|\fv_k\|^2~~\mathrm{a.s..}
\end{equation*}
\item \label{ass:H:5:iii} 
There exist $\mu, B>0$ and $A,C\geq 0$ such that almost-surely and for all $k\in \Nbb$ 
\begin{equation*}
\begin{cases}
\displaystyle \big\langle \nabla F(\wv_k), \Uv_k \Ebb\big[ \fv_k|\Fc_k \big] \big\rangle \geq \mu\big\langle \nabla F(\wv_k), \Uv_k \nabla F(\wv_k) \big\rangle \\[0.1cm] 
\displaystyle \Ebb\big[\|\fv_k\|^2|\Fc_k\big]\leq A \big(F(\wv_k)-F^* \big) + B\| \nabla F(\wv_k)\|^2 + C,
\end{cases}
\end{equation*}
 where $F^*$ denotes the minimal value of $F$.
\item \label{ass:H:5:iv} 
The sequence $(\alpha_k)_{k\in \Nbb}$ is positive and verifies the two Robbins-Monro conditions \cite{robbins1951stochastic} $\sum\limits_{k=0}^{+\infty} \alpha_k =+\infty$ and $\sum\limits_{k=0}^{+\infty} \alpha_k^2<+\infty$.

\end{enumerate}
\end{assumption}

\begin{table}[ht]
    \centering
    \begin{tabular}{ccc||c|c|c|c|c|c|c}
    \hline
     \multicolumn{3}{c||}{Reference} & $\mu$ & $A$ & $B$ & $C$   & $\Uv_k$ & $\underline{\nu}$ & $\overline{\nu}$  \\  
     \hline\hline
     Bertsekas and Tsitsiklis & (2000) & \cite{bertsekas2000gradient} & \ding{55} & \ding{55} & \ding{51}& \ding{51}& \ding{55} & \ding{55}& \ding{55} \\
     \hline
     Schmidt and Leroux & (2013) & \cite{schmidt2013fast} & \ding{55} & \ding{55}&  \ding{51} & \ding{55} & \ding{55} &\ding{55} & \ding{55} \\ \hline
     Berahas \textit{et al} & (2016) & \cite{berahas2016multi} & \ding{55}& \ding{55}& \ding{51}& \ding{51}& \ding{51}& \ding{51}& \ding{51} \\
     \hline
     Bollapragada \textit{et al} & (2016) & \cite{bollapragada2019exact} & \ding{55}& \ding{55}& \ding{51}& \ding{51}& \ding{51}& \ding{51}& \ding{51} \\
     \hline
     Wang \textit{et al} &(2017)& \cite{wang2017stochastic}  &\ding{55}& \ding{55}&  \ding{51} & \ding{51} & \ding{51} & \ding{51} & \ding{51} \\ \hline
     Ghadimi and Lan & (2018) & \cite{ghadimi2013stochastic} & \ding{55}& \ding{55}& \ding{55}& \ding{51}& \ding{55}&
     \ding{55}& \ding{55} \\ \hline
     Bottou \textit{et al} & (2018) & \cite{bottou2018optimization} & \ding{51}& \ding{55}& \ding{51}& \ding{51}& \ding{55}&\ding{55}& \ding{55} \\
     \hline
     Gower \textit{et al} & (2019) & \cite{gower2019sgd} & \ding{55}& \ding{51}& \ding{55}& \ding{55}& \ding{55}& \ding{55}& \ding{55} \\ 
    \hline
     Jahani \textit{et al} & (2021) & \cite{jahani2021doubly} & \ding{55}& \ding{55}& \ding{51}& \ding{51}& \ding{51}& \ding{51}& \ding{51} \\
     \hline
     Chouzenoux and Fest & (2022) & \cite{chouzenoux2022sabrina} & \ding{55} & \ding{55} & \ding{51} & \ding{51} & \ding{51} & \ding{51} & \ding{51} \\
     \hline
    \end{tabular}
    \caption{
    Examples of preconditioned SGD schemes from the literature (introduced by chronological order) satisfying Assumption~\ref{ass:H:5}\ref{ass:H:5:iii}-\ref{ass:H:5:iv} and dealing with non-convex problems. 
    \newline
    Symbols in each box should be interpreted as follows; 
    \newline 
    \ding{51}: $A, C \neq 0,~~\mu, B, \underline{\nu}, \overline{\nu} \neq 1,~~\Uv_k \neq  \mathrm{I}_N.$ 
    \newline
    \ding{55}: $A, C = 0,~~\mu, B, \underline{\nu}, \overline{\nu} = 1,~~\Uv_k = \mathrm{I}_N$ (i.e. the counterpart conditions of \ding{51}).
    }
  \label{tab1}
\end{table}

Table~\ref{tab1} gives examples of SGD schemes from the literature of the form of~\eqref{eq:scheme:grad_sto} satisfying Assumption~\ref{ass:H:5}. 
In addition, a few comments can be made on these assumptions, given in the following remark. 
\begin{remark}\
\begin{enumerate}
\item 
Assumption~\ref{ass:H:5}\ref{ass:H:5:i} and \ref{ass:H:5:ibis} are directly related to the curvature of $F$, i.e. the function to minimize in \eqref{eq:pb}. The first one is common and tends to promote the existence of a descent condition for any gradient-type schemes (both in deterministic and stochastic) via the usual descent lemma \cite{bertsekas1997nonlinear}. The second one is necessary to easily verify Assumption~\ref{ass:H2}. 
\item 
Assumption \ref{ass:H:5}\ref{ass:H:5:vi} is very common, although implicit most of the time. It will ensure the integrability of some of almost-sure limits we will need to manipulate. 
\item 
Assumption \ref{ass:H:5}\ref{ass:H:5:ii} ensures that $(\Uv_k)_{k\in \Nbb}$ is \textit{well-conditioned enough} and is not over restrictive. 
First, since, for every $k\in \Nbb$, $\Uv_k$ is generally constructed using the past information, $\Fc_k$-measurability condition is not too constraining. 
Second, the uniformly bounded spectrum condition of sequence $(\fv_k)_{k\in \Nbb}$, which is essential to ensure the existence of a decreasing Lyapunov function associated with the problem, is directly satisfied in the convex case when $(\Uv_k)_{k\in \Nbb}$ approximates the second order information of $F$ (if twice-differentiable) and includes regularity terms in its structure \cite{byrd2016stochastic, yousefian2016stochastic}. In the non-convex case, it generally requires specific update rules \cite{wang2017stochastic,chen2019stochastic}. Such rules have been investigated extensively also in the deterministic case, in particular within majorization-minimization theory \cite{geman1995nonlinear,chouzenoux2010majorize}.  
\item 
The first condition in Assumption~\ref{ass:H:5}\ref{ass:H:5:iii} classically stipulates that, with respect to the past events, the preconditioned approximate gradient and the true one are oriented in relatively closed directions. This is typically verified in the situation where process $(\fv_k)_{k\in \Nbb}$ is unbiased regarding filtration $(\Fc_k)_{k\in \Nbb}$ i.e. $\Ebb[\fv_k~|~\Fc_k] = \nabla F(\wv_k)~\mathrm{a.s.}$ for all $k\in \Nbb$. 

The second condition, often called the \textit{ABC Condition} in the literature \cite{khaled2020better}, is linked to the conditional variance and indicates that it can be controlled by three constants: $A$ controlling the distance between the evaluation of $F$ and $F^*$, $B$ controlling the multiplicative tolerable error with respect to the true gradient, and $C$ controlling some additive error. 
This condition has been first introduced in \cite{bertsekas2000gradient} to investigate SGD convergence in the non-convex setting, with $A=0$. 
It has then been used in \cite{ghadimi2013stochastic} with $A=0$ and $B=1$, and in \cite{schmidt2013fast} with $A=C=0$. The latter case is often called \textit{Strong Growth Condition}.
Although this simple case allows to study SGD taking a permissive stepsize (typically a constant one), this situation is only encountered in practice when $F$ (written in the form of an empirical risk) has all its stationary points following an interpolation condition \cite{schmidt2013fast}. 
Examples of these $A,B,C$ variables encountered in the literature are summarized in Table~\ref{tab1}.

\item 
Finally, Assumption~\ref{ass:H:5}\ref{ass:H:5:iv} is the standard Robbins-Monro condition \cite{robbins1951stochastic} used to control the relative fluctuations of the variance terms, in particular generated by the additive term $C$. 
\end{enumerate}
\end{remark}

\subsection{Quasi super-martingale condition}

Similarly to the historical approach of \cite{robbins1971convergence}, the first step to show the convergence of $(\wv_k)_{k\in \Nbb}$ as defined in \eqref{eq:scheme:grad_sto} is to establish a quasi-supermartingale descent inequality as follows.

\begin{lemma}\label{lemma:grad}
    Under Assumption \ref{ass:H:5} \ref{ass:H:5:i},\ref{ass:H:5:ii}-\ref{ass:H:5:iv}, process $(\wv_k)_{k\in \Nbb}$ defined in~\eqref{eq:scheme:grad_sto} satisfies for all $k \in \Nbb$: 
\begin{align}\label{eq:descent_inequality_grad}
    &\Ebb[F(\wv_{k+1})-F^*|\Fc_k] \notag \\  
    &\leq \left(1+ A \lambda \alpha_k^2 \right)\left(F(\wv_k)-F^*\right) 
        - \alpha_k  \left(\mu \underline{\nu} - B \lambda \alpha_k \right)\|\nabla F(\wv_k)\|^2 + C \lambda \alpha_k^2 ~\mathrm{a.s.,}
\end{align}
    where $\lambda = (\beta \overline{\nu}^2)/2$.
\end{lemma}

\vspace{0.4cm}

\begin{proof}
The proof strategy for this result is very common 
to study stochastic optimization schemes in a differentiable setting. In our case, this can be seen as an easy generalization of the proof of \cite{ghadimi2013stochastic} or \cite{bottou2010large}, that can typically be found in \cite{wang2017stochastic}. 
\end{proof}

A common approach to deduce asymptotic convergence results of SGD (in a non-convex setting) using \eqref{eq:descent_inequality_grad} then consists in using the Robbins-Siegmund lemma \cite{robbins1971convergence}.
In a nutshell, this lemma almost surely ensures that $(F(\wv_{k}))_{k\in \Nbb}$ converges to a finite limit, and that $\sum_{k=0}^{+\infty}\alpha_k \| \nabla F(\wv_k)\|^2 < +\infty$ (assuming that $(\alpha_k)_{k\in \Nbb}$ is small enough). 
In this article, the framework we have developed aims to drop the square for the summability, i.e. we aim to obtain $\sum_{k=0}^{+\infty}\alpha_k \| \nabla F(\wv_k)\|<+\infty$. For this we need to build a suitable Lyapunov function $V$ to rely on  Theorem~\ref{thKL:inequality}.
Before this, we need to reformulate the algorithm's descent condition \eqref{eq:descent_inequality_grad}, obtained in Lemma~\ref{lemma:grad}. 
\begin{corollary}\label{cor:reform_descent}
    Under Assumption \ref{ass:H:5} \ref{ass:H:5:i},\ref{ass:H:5:ii}-\ref{ass:H:5:iv}, the process $(\wv_k)_{k\in \Nbb}$ defined in~\eqref{eq:scheme:grad_sto} satisfies 
\begin{multline}\label{eq:rewrite_sto_grad}
        (\forall k \in \Nbb)\quad
        \Ebb\Big[p_{k+1}^{-1}\big(F(\wv_{k+1})-F^*\big) + C\lambda r_{k+1} \big| \Fc_k\Big] \\
        \leq ~p_k^{-1}\big(F(\wv_k)-F^*\big) + C\lambda r_k  
        - p_{k+1}^{-1}\alpha_k  \big(\mu \underline{\nu} - \lambda B \alpha_k \big)\|\nabla F(\wv_k)\|^2
        ~\mathrm{a.s.}, 
    \end{multline}
    where
    \begin{equation}\label{eq:def:rk-pk}
    (\forall k \in \Nbb)\quad 
        p_k:= \prod_{i=0}^k(1+A \lambda \alpha_i^2) 
        \quad\text{ and }\quad
        r_k:= \sum_{i=k}^{+\infty} p_i^{-1}\alpha_i^2.
    \end{equation}
\end{corollary}

\vspace{0.2cm}

\begin{proof}
    We only need to show that \eqref{eq:descent_inequality_grad} implies \eqref{eq:rewrite_sto_grad}. \\
    Since $\sum\limits_{k=0}^{+\infty} \alpha_k^2 <+\infty $ and that, for every $k\in \Nbb$, $p_k \ge 1$, thus we have $\displaystyle \sum_{k=0}^{+\infty} p_k^{-1}\alpha_k^2 < +\infty$. This allows us to write a ``difference trick" on $(p_k^{-1}\alpha_k^2)_{k\in \Nbb}$ of the form of:
\begin{equation}\label{eq:diff_trick}
(\forall k \in \Nbb)~~p_k^{-1}\alpha_k^2 = \sum_{i=k}^{+\infty} p_i^{-1}\alpha_i^2 - \sum_{i=k+1}^{+\infty} p_i^{-1} \alpha_i^2 = r_k - r_{k+1}.
\end{equation}
Multiplying, for every $k\in \Nbb$, \eqref{eq:descent_inequality_grad} by $p_{k+1}^{-1}$, and using the fact that the associated sequence $(p_k^{-1})_{k\in \Nbb}$ is non-increasing, we obtain 
\begin{align*}
    &\Ebb\left[p_{k+1}^{-1}\left(F(\wv_{k+1})-F^*\right)|\Fc_k\right] \\  
    &\leq p_{k+1}^{-1}\left(1+ A \lambda \alpha_k^2 \right)\left(F(\wv_k)-F^*\right) 
    - p_{k+1}^{-1}\alpha_k  \left(\mu \underline{\nu} - B \lambda \alpha_k \right)\|\nabla F(\wv_k)\|^2 + C \lambda p_{k+1}^{-1}\alpha_k^2 \\
    &\leq p_{k+1}^{-1}\left(1+ A \lambda \alpha_k^2 \right)\left(F(\wv_k)-F^*\right) 
    - p_{k+1}^{-1}\alpha_k  \left(\mu \underline{\nu} - B \lambda \alpha_k \right)\|\nabla F(\wv_k)\|^2 + C \lambda p_{k}^{-1}\alpha_k^2.
\end{align*}
Combining this inequality with \eqref{eq:diff_trick} leads to the conclusion
\begin{align*}
    &\Ebb\left[p_{k+1}^{-1}\left(F(\wv_{k+1})-F^*\right)|\Fc_k\right] \\  
    &\leq p_k^{-1}\left(F(\wv_k)-F^*\right) 
    - p_{k+1}^{-1}\alpha_k  \left(\mu \underline{\nu} - B \lambda \alpha_k \right)\|\nabla F(\wv_k)\|^2 + C \lambda (r_k -r_{k+1})~\mathrm{a.s.},   
\end{align*}
hence the result.
\end{proof}

\subsection{Choice of the Lyapunov function}
\label{subsec:Lya}

As highlighted in the previous subsection, we now aim to build a suitable Lyapunov function $V$ to apply Theorem~\ref{thKL:inequality}.
Considering the Euclidean space product $\Hc= \Rbb^N \times \Rbb \times \Rbb$ (of finite dimension), we deduce a family of processes able to verify Assumptions \ref{ass:H2} and \ref{ass:H4}.

\begin{proposition}\label{prop:sto_grad_Lya}
Assume that Assumptions~\ref{ass:H:5}\ref{ass:H:5:i}-\ref{ass:H:5:iv} hold, and assume that $\sup_{k\in \Nbb} \alpha_k < \mu \underline{\nu}(\lambda B)^{-1}$ (with $\lambda$ given in Lemma~\ref{lemma:grad}). 
Let $m\in \Nbb^*$ and define the process $(\xv_{m,k})_{k\in \Nbb}$ such that for every $k\in \Nbb$, $\xv_{m,k} = \left(\wv_k, p_k^{-1/(2m)}, r_k^{1/(2m)}\right) \in \Hc$, where $p_k$ and $r_k$ are defined in Corollary~\ref{cor:reform_descent}. Let 
\begin{equation}\label{eq:Lya_sto_grad}
\begin{array}{l@{}c@{}cl}
    V_m \colon & \Hc & \to & \Rbb \\
    & (\wb, t,r) & \mapsto & t^{2m}\left(F(\wb)-F^*\right) + C\lambda r^{2m}
\end{array}
\end{equation} 
and $\Gamma_m : = \zer \nabla F \times \left \{ p_{\infty}^{-1/(2m)}\right\} \times \{0\}$ where $p_{\infty}:=\prod\limits_{i=0}^{+\infty}(1+A \lambda \alpha_k^2)\in(0,+\infty)$. 
Then the following holds: 
\begin{enumerate}
    \item 
    The limit of $\left(F(\wv_k)\right)_{k\in \Nbb}$, denoted by $\FD_{\infty}$ exists and is integrable.
    \item $(\xv_{m,k})_{k\in \Nbb}$ satisfies Assumption \ref{ass:H2} and Assumption \ref{ass:H4} for $V=V_m$ and $\Gamma=\Gamma_m$ defined above.  
    \item 
    $\left(V_m\left(\xv_{m,k}\right)\right)_{k\in \Nbb}$ almost-surely converges to an integrable limit given by
    \begin{equation}\label{prop:sto_grad_Lya:proof:lya_lim}
     \VD_{m, \infty}= p_{\infty}^{-1/(2m)}\left(\FD_{\infty}-F^*\right)   .
    \end{equation}
    \item 
    If $\FD_{\infty}$ satisfies, for every $k\in \Nbb$, $\Ebb[\FD_{\infty}|\Fc_k]>\FD_{\infty}$ a.s., then Lemma~\ref{lemma:sum}\ref{lemma:sum:2} is satisfied.
\end{enumerate}

\end{proposition}

\vspace{0.4cm}

\begin{proof}
Let $m\in \Nbb^*$. 
\begin{enumerate}
\item Since Lemma~\ref{lemma:grad} holds with $\sum_{k=0}^{+\infty} \alpha_k^2<+\infty$ and Assumption \ref{ass:H:5}\ref{ass:H:5:vi} the integrable version of Robbins-Siegmund lemma can be used \cite{robbins1971convergence}. As such and almost-surely, $\FD_{\infty}$ is well-defined integrable.
\item 
In addition to the previous fact, we also have $\liminf_{k\to +\infty} \|\nabla F(\wv_k) \|=0$ due to $\sum_{k=0}^{+\infty} \alpha_k = +\infty$ and $\sup_{k\in \Nbb} \alpha_k < \mu \underline{\nu}(B\lambda)^{-1}$. 
We deduce that $\Upsilon_{\infty}$, the set of accumulation points of $(\wv_k)_{k\in \Nbb}$ almost-surely contains a point of $\zer \nabla F$. \\
Furthermore, since $r_k^{1/(2m)} \underset{k \to + \infty}{\to} 0$ and $p_k^{-1/(2m)} \underset{k \to + \infty}{\to} p_{\infty}^{-1/(2m)}$, then the set of accumulation points of $(\xv_{m,k})_{k\in \Nbb}$ is equal to $\chi_{m,\infty} = \Upsilon_{\infty} \times \left \{ p_{\infty}^{-1/(2m)} \right\} \times \{0\}$ and contains a point of $\zer \nabla F \left \{ p_{\infty}^{-1/(2m)} \right\} \times \{0\} $. 
According to Assumption~\ref{ass:H:5}\ref{ass:H:5:i}, $V_m$ is continuous and coercive.  
Hence, using Assumption \ref{ass:H:5}\ref{ass:H:5:ibis},
we have $|V_m(\zer \nabla F \times \{0\} \times \left \{ p_{\infty}^{-1}\right\}))|=|F(\zer \nabla F)|<+\infty$, 
and it follows that Assumption \ref{ass:H2}\ref{ass:H2:i} is satisfied. 

Due to assumption \ref{ass:H:5} \ref{ass:H:5:vi}, process $\left(V_m\left(\xv_{m,k}\right)\right)_{k\in \Nbb}$ is clearly integrable and, for all $k\in \Nbb$, \eqref{eq:rewrite_sto_grad} can be also rewritten as
\begin{equation}\label{prop:sto_grad_Lya:proof:1}
\Ebb\left[V_m\left(\xv_{m,k+1}\right)|\Fc_k\right] \leq V_m\left(\xv_{m,k}\right)
  - p_{k+1}^{-1}\alpha_k  \left(\mu \underline{\nu} - B\lambda \alpha_k \right)\|\nabla F(\wv_k)\|^2~~\mathrm{a.s..} 
\end{equation}
Finally, $\sup\limits_{k\in \Nbb} \alpha_k < \mu \underline{\nu}(\lambda B)^{-1}$ ensures for $\left(V_m\left(\xv_{m,k}\right)\right)_{k\in \Nbb}$ to be a supermartingale and so to verify Assumption~\ref{ass:H4}.

\item 
Following from (ii), $\left(V_m\left(\xv_{m,k}\right)\right)_{k\in \Nbb}$ almost-surely converges to an integrable limit $\VD_{m, \infty}$ and regarding (i) and \eqref{eq:rewrite_sto_grad}, the latter can be expressed as in \eqref{prop:sto_grad_Lya:proof:lya_lim}.

\item 
Then, $\FD_{\infty}$ is integrable and, almost-surely for all $k\in \Nbb$, due to $\Ebb[\FD_{\infty}|\Fc_k]>\FD_{\infty}$ we have $\Ebb[\VD_{m, \infty}|\Fc_k]>\VD_{m, \infty}$ and Lemma~\ref{lemma:sum}\ref{lemma:sum:2} holds. \end{enumerate}

\end{proof}

The previous proof relies on the convergence $(p_k)^{-1/(2m)}_{k\in \Nbb}$ $(r_k)_{k\in \Nbb}$ to a finite limit, and more specifically on the fact that their set of accumulation points is reduced to a singleton. This allows us to make easy links between process $(F(\wv_k))_{k\in \Nbb}$, directly associated with the initial scheme, and process $\left(V_m(\xv_{m,k})\right)_{k\in \Nbb}$ verifying the supermartingale condition. Asymptotically, the link between $\FD_{\infty}$ and $\VD_{m, \infty}$ is made throughout relation\eqref{prop:sto_grad_Lya:proof:lya_lim} whose remains moderate in term of complexity. 

In the case when the constant $A$ appearing in Assumption~\ref{ass:H:5}\ref{ass:H:5:iii} is equal to zero, the choice of the Lyapunov-type space and function can be simplified. Following the same technique of proof as for Proposition~\ref{prop:sto_grad_Lya} and using the fact that $p_k:=1$ in such a case, the following result is obtained.

\begin{proposition}\label{prop:sto_grad_Lya_bis}
Assume that Assumption~\ref{ass:H:5} holds with $A=0$, and assume that $\sup_{k\in \Nbb} \alpha_k < \mu \underline{\nu}(\lambda B)^{-1}$ (with $\lambda$ given in Lemma~\ref{lemma:grad}). 
Let $\check{\Hc} = \Rbb^N \times \Rbb$ and $m\in \Nbb^*$. Define the process $(\check{\xv}_{m,k})_{k\in \Nbb}$ such that for every $k\in \Nbb$, $\check{\xv}_{m,k} = \left(\wv_k, r_k^{1/(2m)}\right) \in \check{\Hc}$, where $r_k$ is defined in Corollary~\ref{cor:reform_descent}. Let 
\begin{equation}\label{prop:sto_grad_Lya_bis:lya_func}
\begin{array}{l@{}c@{}cl}
    \check{V}_m \colon & \check{\Hc} & \to & \Rbb \\
    & (\wb, r) & \mapsto & F(\wb)-F^* + C\lambda r^{2m}
\end{array}
\end{equation} 
and $\check{\Gamma} : = \zer \nabla F \times \{0\}$. 
Then the following holds: 
\begin{enumerate}
    \item The limit of $\left(F(\wv_k)\right)_{k\in \Nbb}$, denoted by $\FD_{\infty}$ exists and is integrable.
    \item $(\check{\xv}_{m,k})_{k\in \Nbb}$ satisfies Assumption \ref{ass:H2} and Assumption \ref{ass:H4} for $\check{V}_m$ and $\check{\Gamma}$ defined above. 
    \item 
    $\left(\check{V}_m\left(\xv_{m,k}\right)\right)_{k\in \Nbb}$ almost-surely converges to an integrable limit given by
    \begin{equation}\label{prop:sto_grad_Lya:proof:lya_lim_bis}
        \check{\VD}_{m, \infty}=F_{\infty} - F^*.
    \end{equation}
    \item 
    If $\FD_{\infty}$ satisfies, for every $k\in \Nbb$, $\Ebb[\FD_{\infty}|\Fc_k]>\FD_{\infty}$ a.s., then Lemma~\ref{lemma:sum}\ref{lemma:sum:2} is satisfied.
\end{enumerate}
\end{proposition}


\section{Application to generic SGD scheme}
\label{sec6}

In this section, we introduce our main asymptotical results related to scheme \eqref{eq:scheme:grad_sto}, ensuring the convergence in expectation for a sum close to $\sum_{k=0}^{+\infty} \alpha_k \| \nabla F(\xv_k)\|$ in a way to estimate the mean length of $(\wv_k)_{k\in \Nbb}$. To our knowledge, this result is novel, and although applied here to scheme \eqref{eq:scheme:grad_sto}, it could be mimicked to study the asymptotic behaviour of other stochastic processes. 
A simplified version of these results has been introduced by the authors in \cite{chouzenoux2023kurdyka}.

\subsection{Asymptotic behavior of generic SGD scheme in a non-convex setting}
\label{Ssec:sec6:gen}

We first aim to study the asymptotic behavior of the SGD scheme given in \eqref{eq:scheme:grad_sto}. Similarly to results given in subsection~\ref{subsec:Lya}, we distinguish the cases the constant $A$ of Assumption~\ref{ass:H:5:iii} is zero or not.

\begin{proposition} (Upper-bound in expectation for non-convex SGD) \label{prop:sum_sto_grad}
Assume that Assumption~\ref{ass:H:5} holds with $\sup_{k\in \Nbb} \alpha_k < \mu \underline{\nu}(\lambda B)^{-1}$ ($\lambda$ given in Lemma~\ref{lemma:grad}) and that $F$ verifies the K\L property on $\Rbb^N$, given in Definition~\ref{def_KL}. 
Then, according to Proposition~\ref{prop:sto_grad_Lya}, process $\left(F(\wv_k)\right)_{k\in \Nbb}$ almost-surely converges to an integrable limit $\FD_{\infty}$. Moreover, if the latter almost-surely verifies
\begin{equation}\label{prop:sum_sto_grad:cond}
        \displaystyle (\forall k \in \Nbb)\quad   
        \Ebb \big[ \FD_{\infty}|\Fc_k \big] > \FD_{\infty} 
        \text{ and }
       \|\nabla F(\wv_k)\|>0,
\end{equation}
then, for every $m\in \Nbb^*$, there exits a desingularization function $\varphi_m\in \Phi_{+\infty}$ and an almost-sure finite random variable $\KD_m>0$ such that
\begin{equation}\label{eq:prop:sum_sto_grad}
    \Ebb \left[ \sum_{k=\KD_m}^{+\infty} \frac{\alpha_k \|\nabla F(\wv_k)\|}{\sqrt{1 + 4m^2\YD_{m,k}}}\right] \leq \left(\frac{1+A\lambda \overline{\alpha}}{\underline{\nu} - B\lambda \overline{\alpha} }\right) \Ebb \big[ \varphi_m \big( V_m(\xv_{m,0})-\VD_{m,\infty} \big) \big],
\end{equation}
where $V_m$ (resp $\VD_{m,\infty}$) is the Lyapunov function (resp its limit) defined in \eqref{eq:Lya_sto_grad} (resp in \eqref{prop:sto_grad_Lya:proof:lya_lim}), and
\begin{equation}\label{eq:prop:sum_sto_grad:cond-defY}
    (\forall k \in \Nbb)\quad
    \YD_{m,k}=\frac{p_k^{1/m}(F(\wv_k)-F^*)^2+(C\lambda)^2 p_k^2 r_k^{2-1/m}}{\|\nabla F(\wv_k)\|^2}.
\end{equation}


\end{proposition}

\vspace{0.4cm}

\begin{proof} 
Let $m\in \Nbb^*$. 
The Lyapunov function $V_m$ defined in \eqref{eq:Lya_sto_grad} is differentiable (Assumption~\ref{ass:H:5}\ref{ass:H:5:i}), and 
\begin{equation}\label{prop:sum_sto_grad:proof:1}
(\forall (\wb,t, r) \in \Hc)\quad
    \nabla V_m(\wb,t, r) =
    \begin{pmatrix} t^{2m}\nabla F(\wb) \\ 2m t^{2m-1} \big( F(\wb) - F^* \big) \\ 2m C\lambda r^{2m-1}     \end{pmatrix}. 
\end{equation}
Moreover, since $F$ and $u\in \Rbb \mapsto u^{2m}$ verify the $\text{K\L}$   property, so it is for $V_m$. 
Then, according to Proposition~\ref{prop:sto_grad_Lya}, all the conditions for applying Theorem \ref{thKL} are met. 
Hence there exists $\varphi_m \in \Phi_{+\infty}$ as well as an almost-sure finite random variable $\KD_m>0$ such that
\begin{equation*}
    \Ebb\left[\sum_{k=\KD_m}^{+\infty} \frac{V_m(\xv_{m,k})-\Ebb \big[ V_m(\xv_{m,k+1})|\Fc_k \big]}{\|\nabla V_m(\xv_{m,k})\|_{\Hc}}\right] \leq \Ebb \big[ \varphi_m\big( V_m(\xv_{m,0}) - \VD_{m,\infty} \big) \big],
\end{equation*}
where $(\xv_{m,k})_{k\in \Nbb}$ is defined in Proposition~\ref{prop:sto_grad_Lya}.
Combining this inequality with \eqref{prop:sto_grad_Lya:proof:1} and \eqref{prop:sum_sto_grad:proof:1} with $(\wb, t, r) = (\wv_k, p_k^{-1/(2m)}, r_k^{1/(2m)})$, we obtain
\begin{multline*}
    \Ebb \left[\sum_{k=\KD_m}^{+\infty}  
    \frac{ p_{k+1}^{-1}\alpha_k \left(\underline{\nu} - B \lambda \alpha_k  \right) \| \nabla F(\wv_k)\|^2}%
        {\sqrt{p_k^{-2}\|\nabla F(\wv_k)\|^2 + 4m^2 \left(p_k^{1/m-2}(F(\wv_k)-F^*)^2 + (C\lambda)^2 r_k^{2-1/m}\right)}}\right] \\
    \leq \Ebb \big[ \varphi_m \big( V_m(\xv_{m,0})-\VD_{m,\infty} \big) \big].  
\end{multline*}
Factorizing by $\| \nabla F(\wv_k)\|^2$ into the square root of the denominator and using the fact that, for every $k\in \Nbb$, $p_{k+1}/p_k = (1+A\lambda \alpha_k)$, we obtain
\begin{multline*}
    \Ebb\left[\sum_{k=\KD_m}^{+\infty}  
    \frac{\alpha_k \left(\underline{\nu} - B \lambda \alpha_k  \right)(1+A\lambda \alpha_k)^{-1} \| \nabla F(\wv_k)\|}{\sqrt{1 + 4m^2 \left(p_k^{1/m}(F(\wv_k)-F^*)^2 + (C\lambda)^2 r_k^{2-1/m}\right)/\| \nabla F(\wv_k)\|^2}}\right]  \\
    \leq \Ebb[\varphi_m\left(V_m(\xv_{m,0})-\VD_{m,\infty}\right)].  
\end{multline*}
Consequently \eqref{eq:prop:sum_sto_grad} follows by using $\sup_{k\in \Nbb} \alpha_k < \underline{\nu}(\lambda B)^{-1}$. 
\end{proof}

Following the same line of proof, starting from Proposition~\ref{prop:sto_grad_Lya_bis}, we obtain a more specific upper-bound when $A=0$ in Assumption~\ref{ass:H:5}\ref{ass:H:5:iii}.  
\begin{proposition}\label{prop:sum_sto_grad_bis}(Upper-bound in expectation for non-convex SGD, $A=0$)
Assume that Assumption~\ref{ass:H:5} holds with $A=0$ and $\sup_{k\in \Nbb} \alpha_k < \mu \underline{\nu}(\lambda B)^{-1}$ ($\lambda$ given in Lemma~\ref{lemma:grad}) and that $F$ verifies the $\text{K\L}$   property on $\Rbb^N$, given in Definition~\ref{def_KL}. 
Then, according to Proposition~\ref{prop:sto_grad_Lya_bis}, process $\left(F(\wv_k)\right)_{k\in \Nbb}$ almost-surely converges to an integrable limit $\FD_{\infty}$. Moreover, if \eqref{prop:sum_sto_grad:cond} holds, 
then, for every $m\in \Nbb^*$, there exits a desingularization function $\check{\varphi}_m\in \Phi_{+\infty}$ and an almost-sure finite random variable $\check{\KD}_m$ such that
\begin{equation}\label{eq:prop:sum_sto_grad_bis}
    \Ebb \left[ \sum_{k=\check{\KD}_m}^{+\infty} \frac{\alpha_k \|\nabla F(\wv_k)\|}{\sqrt{1 + 4m^2\check{\YD}_{m,k}}}\right] \leq \left(\underline{\nu} - B\lambda \overline{\alpha} \right)^{-1} \Ebb \big[ \check{\varphi}_m \big( \check{V}_m(\xv_{m,0})-\check{\VD}_{m,\infty} \big) \big],
\end{equation}
where $\check{V}_m$ is the Lyapunov function defined in \eqref{prop:sto_grad_Lya_bis:lya_func}, $\check{\VD}_{m,\infty}$ the limit given in \eqref{prop:sto_grad_Lya:proof:lya_lim_bis}, and
\begin{equation*}
    (\forall k \in \Nbb)\quad
    \check{\YD}_{m,k} 
    = \frac{(C\lambda)^2 r_k^{2-1/m}}{\|\nabla F(\wv_k)\|^2}. 
\end{equation*}


\end{proposition}


\subsection{Particular cases for stronger convergence results}
\label{Ssec:sec6:special}

The results in Propositions~\ref{prop:sum_sto_grad} and \ref{prop:sum_sto_grad_bis} give upper-bounds on the expectation of the summation of a power of the norm of the gradient of $F$.
In this section we will investigate particular cases allowing to deduce stronger convergence guarantees. In particular we first focus on the Strong Growth Condition, and then explore another type of assumption.

\subsubsection{Almost-sure convergence of SDG under Strong Growth Condition} 

We consider the process generated in~\eqref{eq:scheme:grad_sto}, and focus on the case  when $A=C=0$ in Assumption~\ref{ass:H:5}\ref{ass:H:5:iii}, i.e., the usual Strong Growth Condition \cite{schmidt2013fast}.
In this case, it can be noticed that, for every $m\in \Nbb^*$, $\check{\YD}_{m,k}=0$, hence the following result is a direct application of Proposition~\ref{prop:sum_sto_grad_bis}.


\begin{proposition}(Almost sure convergence of SGD under Strong Growth Condition)\label{cor:cvunderSGC}
Let $(\wv_k)_{k\in \Nbb}$ be the process generated by~\eqref{eq:scheme:grad_sto}.
Under the same assumptions as in Proposition~\ref{prop:sum_sto_grad_bis}, and assuming further that $C=0$ in Assumption~\ref{ass:H:5}\ref{ass:H:5:iii}, the process $(\wv_k)_{k\in \Nbb}$ almost surely converges to a stationary point of $F$.
Moreover, there exits $\varphi\in \Phi_{+\infty}$ and an almost-surely finite random variable $\KD>0$ such that
\begin{equation}\label{eq:cor:cvunderSGC}
    \Ebb \left[ \sum_{k=\KD}^{+\infty} \Ebb \big[ \|\wv_{k+1}-\wv_k\|~|~\Fc_k \big] \right]
    \leq \frac{1}{\overline{\nu} \sqrt{B}} \left(\underline{\nu} - \overline{\alpha} B \lambda \right)^{-1} \Ebb \big[ \varphi \big(F(\wv_0)-\FD_{\infty} \big) \big],
\end{equation}
where $\FD_{\infty}$ is the integrable limit of $\left(F(\wv_k)\right)_{k\in \Nbb}$. 
\end{proposition}

\vspace{0.4cm}

\begin{proof}
Since $C=0$, we have, for every $k\in \Nbb$ and $m\in \Nbb^*$, $\check{\YD}_{m,k}=0$ in Proposition~\ref{prop:sum_sto_grad_bis}, and by definition $\check{V}_m(\xv_{m,k})=F(\wv_k) - F^*$ and $\check{\VD}_{m, \infty}=F_{\infty} - F^*$ (see \eqref{prop:sto_grad_Lya_bis:lya_func} and \eqref{prop:sto_grad_Lya:proof:lya_lim_bis}, respectively). It follows that \eqref{eq:prop:sum_sto_grad_bis} can be simplified as:
\begin{equation}\label{proof:cor:cvunderSGC:1}
\Ebb \left[ \sum_{k=\check{\KD}_m}^{+\infty} \alpha_k \|\nabla F(\wv_k)\|\right] 
\leq \left(\underline{\nu} - \overline{\alpha}B\lambda \right)^{-1} \Ebb \big[ \varphi_m \big( F(\wv_0)   -\FD_{\infty}\big) \big].
\end{equation}
Let $k\in \Nbb$. According to \eqref{eq:scheme:grad_sto}, and using Assumption~\ref{ass:H:5}\ref{ass:H:5:iii}, we have
\begin{align*}
    \Ebb\left[\|\wv_{k+1}-\wv_k\|~|~\Fc_k\right] 
    = \alpha_k \left[\|\Bv_k \fv_k\|~|~\Fc_k\right] 
    \leq \alpha_k \overline{\nu}~ \Ebb \left[\| \fv_k\|~|~\Fc_k\right] .
\end{align*}
Then, applying Jensen's inequality to the last inequality, combined with Assumption~\ref{ass:H:5}\ref{ass:H:5:iv}, yield
\begin{equation*}
    \Ebb\left[\|\wv_{k+1}-\wv_k\|~|~\Fc_k\right] 
    \leq \alpha_k \overline{\nu}~ \|\Ebb \left[ \fv_k~|~\Fc_k\right]\|
    \leq \alpha_k \overline{\nu} \sqrt{B} \|\nabla F(\wv_k)\|.
\end{equation*}
Combining the last result with \eqref{proof:cor:cvunderSGC:1}, and taking $\varphi=\check{\varphi}_m$ and $\KD=\check{\KD}_m$, we obtain \eqref{eq:cor:cvunderSGC}.

\smallbreak

Since $\KD>0$ is finite almost-surely and that all terms in \eqref{eq:cor:cvunderSGC} are positive, we can deduce that
\begin{equation*}
    \sum_{k=0}^{+\infty} \Ebb\left[\|\wv_{k+1}-\wv_k\|~|~\Fc_k\right]<+\infty~~\mathrm{a.s..}
\end{equation*}
Since, fro every $k\in \Nbb$, $\|\wv_{k+1}-\wv_k\|\geq 0$, we can use
Levy’s sharpening of Borel-Cantelli Lemma \cite[ Ch.1, Th.21]{meyer2006martingales} leading to
\begin{equation*}
\sum_{k=0}^{+\infty}\|\wv_{k+1}-\wv_k\|<+\infty~~\mathrm{a.s.}.
\end{equation*}
Almost-surely $(\wv_k)_{k\in \Nbb}$ is then of finite length (i.e. Cauchy) and so it converges. Finally, similarly to the proof of Proposition~\ref{prop:sto_grad_Lya}, $(\wv_k)_{k\in \Nbb}$ also possesses (almost-surely) an accumulation point contained in $\zer \nabla F$, hence the conclusion.  
\end{proof}

\subsubsection{Convergence guarantees under particular convergence rate conditions}

In Proposition~\ref{cor:cvunderSGC} we shown that, under the assumption that $A=C=0$, the process $(\wv_k)_{k\in \Nbb}$ is of finite length almost surely.
In this section we aim to extend this result for a class of events even when $A$ and $C$ are non-zero.
Hence we introduce the two following events:
\begin{align}
\label{event_Xi_1}
    &\Xi_1 := \bigcup\limits_{m\in \Nbb^*} \left\{ \limsup\limits_{k\to +\infty}\left[ 
    \dfrac{\left(\sum_{i=k}^{+\infty}\alpha_i^2\right)^{2-\frac{1}{m}}}{ \| \nabla F(\wv_k)\|^{2}}\right]<+\infty\right\} ,  \\
    \label{event_Xi_2}
    &\Xi_2 := \left\{ \limsup\limits_{k\to +\infty} \frac{\left(F(\wv_k)-F^*\right)}{\| \nabla F(\wv_k)\|}<+\infty\right\},
\end{align}
that will act as \textit{control bounds} on the behaviour of the gradient and the step-size of SGD. Before giving further intuition on these bounds, we give the following convergence guarantees on these events.

\begin{corollary}\label{th:cv_sto_grad_1}
    We consider the same assumptions as in Proposition~\ref{prop:sum_sto_grad}.
    Then 
    \begin{equation*}
        \sum_{k=0}^{+\infty}\alpha_k \| \nabla F(\wv_k)\|<+\infty 
    \end{equation*}
    almost-surely on the event 
    \begin{equation}
    \label{th:cv_sto_grad_1_Xi}
    \Xi := 
    \begin{cases}
      \Xi_1~\text{if}~A=0, \\
      \Xi_2 ~\text{if}~C=0, \\
      \Xi_1 \cap \Xi_2 ~\text{otherwise}.
    \end{cases}
    \end{equation}
\end{corollary}

\vspace{0.4cm}

\begin{proof}
We first look at the case $\Xi=\Xi_1 \cap \Xi_2~$.
We introduce    
\begin{equation*}
\Ac  :=  \bigcap_{m\in \Nbb^*} \left\{ \sum_{k=0}^{+\infty} \frac{\alpha_k \|\nabla F(\wv_k)\|}{\sqrt{1 + \YD_{m,k}}} < + \infty \right\}.
\end{equation*}
Let $\omega \in \Xi\cap \Ac $. By definition of $\Xi$ and $\Xi_1$ more specifically, there exits $m_{\omega}>0$ s.t. 
\begin{equation}
    \dfrac{\left(\sum_{i=k}^{+\infty}\alpha_i^2\right)^{2-\frac{1}{m_{\omega}}}}{ \| \nabla F(\wv_k(\omega))\|^{2}}<+\infty. 
\end{equation}
Then, reformulating \eqref{eq:prop:sum_sto_grad:cond-defY} we have
\begin{equation*}
    \YD_{m_\omega,k}(\omega)
    = p_k^{1/m_\omega}\dfrac{(F(\wv_k(\omega))-F^*)}{\|\nabla F(\wv_k(\omega))\|}
    + (C\lambda)^2 p_k^2 \dfrac{r_k^{2-1/m_\omega}}{\|\nabla F(\wv_k(\omega))\|^2}.
\end{equation*}
On the one hand, since $(p_k)_{k\in \Nbb}$ is bounded (as it converges to $p_\infty$)
On the other hand, since, by definition of $(r_k)_{k\in \Nbb}$ (see \eqref{eq:def:rk-pk}), we have $r_k < \sum_{i=k}^{+\infty} \alpha_i^2$ then the second term in the above equation is also finite. It follows that $\left(\YD_{m_{\omega},k}(\omega)\right)_{k\in \Nbb}$ is bounded and so
\begin{equation}\label{bigO}
    \frac{1}{\sqrt{1 + \YD_{m_{\omega},k}(\omega)}}\underset{k \to +\infty}{=}\Oc(1). 
\end{equation}

As $\omega \in \Ac$
\begin{equation}\label{th:cv_sto_grad_1:proof:2}
\sum_{k=0}^{+\infty}\frac{\alpha_k \|\nabla F(\wv_k(\omega))\|}{\sqrt{1 + \YD_{m_{\omega},k}(\omega)}} <+\infty, 
\end{equation}
and, from positivity $\left(\YD_{m_{\omega},k}(\omega)\right)_{k\in \Nbb}$ and \eqref{bigO}, this leads to
\begin{equation}\label{th:cv_sto_grad_1:proof:3}
\sum_{k=0}^{+\infty}\alpha_k \|\nabla F(\wv_k(\omega))\|<+\infty. 
\end{equation}
Since Proposition \ref{prop:sum_sto_grad} ensures that $\Pbb(\Ac)=1$, \eqref{th:cv_sto_grad_1:proof:3} thus occurs for almost every $\omega$ of $\Xi$, hence the conclusion. 

Cases $\Xi = \Xi_1$ and $\Xi = \Xi_2$ can be treated in a similar way (consider $\left(\check{Y}_{m,k}\right)_{k\in \Nbb}$ instead of $\left(Y_{m,k}\right)_{k\in \Nbb}$ for the case $A=0$).
 
\end{proof}

The practical use of Corollary~\ref{th:cv_sto_grad_1} depends on the probabilities $\Pbb(\Xi_1), \Pbb(\Xi_2)$, that should be close to $1$ to be of interest. We discuss below a few examples where this happens to be the case ($\Xi_2$ first, $\Xi_1$ then).

\begin{example}\label{ex:cv-SGD-cvx}\ 
    If $F$ is assumed to be $\varrho$-strongly convex ($\varrho >0$), then the following particular version 
    of \L ojasiewicz inequality (see e.g. \cite{karimi2016linear}) holds
    \begin{equation} \label{PL-inequality}
        \forall w\in \Rbb^N~~\| \nabla F(w) \|^2 \geq 2 \varrho \left(F(w)-F^*\right)
    \end{equation}
    which ensures that 
    \begin{equation} \label{PL-inequality_application}
    \forall k \in \Nbb \quad \frac{F(\wv_k) - F^*}{\| \nabla F(\wv_k)\|} = \frac{\sqrt{F(\wv_k) - F^*}}{\| \nabla F(\wv_k)\|} \sqrt{F(\wv_k) - F^*} \leq \frac{1}{\sqrt{2\varrho}} \sqrt{F(\wv_k) - F^*}. 
    \end{equation}
    Since $\left(F(\wv_k) - F^*\right)_{k\in \Nbb}$ almost-surely converges to the almost-sure finite limit $F_{\infty}-F^* \geq 0$, \eqref{PL-inequality_application} conducts to $\Pbb(\Xi_2)=1$ with
    \begin{equation}
        \limsup_{k\to +\infty}\frac{F(\wv_k) - F^*}{\| \nabla F(\wv_k)\|} \leq \frac{1}{\sqrt{2\varrho}} \sqrt{F_{\infty} - F^*}~~\mathrm{a.s..}
    \end{equation}    
\end{example}

To investigate more general cases than the convex settings described in Example~\ref{ex:cv-SGD-cvx}, we need to take a closer look at the convergence rates of the SGD scheme.
Intuitively, as discussed in \cite{sebbouh2021almost}, the gradient $(\|\nabla F(\wv_k)\|)_{k\in \Nbb}$ should not converge too fast to $0$ to enable the event $\Xi$ to not be too small, as shown in the following example. 

\begin{example}\label{ex:cv-SGD}
Let, for every $k\in \Nbb$, $\alpha_k=\frac{a}{k^{\frac12+\epsilon}}$ with $a>0$ small enough and $\epsilon\in (0,\frac12)$. Then, 
according to \cite[Corollary 18]{sebbouh2021almost}, we have
\begin{equation*}
\min_{1\leq i\leq k}\| \nabla F(\wv_i)\|^2 \underset{k\to +\infty}{=}o\left(\frac{1}{k^{\frac12-\epsilon}} \right)~~\mathrm{a.s.}. 
\end{equation*}
Hence, intuitively, we can postulate that $\| \nabla F(\wv_i)\|^2$ does not decrease faster than $\frac{1}{k^{1/2}}$. 
As such, assuming that $\| \nabla F(\wv_k)\|^2\underset{k\to +\infty}{\sim} \frac{1}{k^{1/2}}$ almost surely, then for all $m\in \Nbb^*$, we deduce that  
\begin{equation*}
    \dfrac{\left(\sum_{i=k}^{+\infty}\alpha_i^2\right)^{2-\frac{1}{m}}}{ \| \nabla F(\wv_k)\|^2} 
    \underset{k\to +\infty}{\sim} \left(\frac{a^2}{k^{2\epsilon}}\right)^{2-\frac{1}{m}} k^{\frac12} 
    \underset{k\to +\infty}{=} \Oc \left(k^{\left(\frac12-2\epsilon(2-\frac{1}{m})\right)} \right)~~\mathrm{a.s.}. 
\end{equation*}  
Since $\frac12-2\epsilon(2-\frac{1}{m}) \underset{m \to +\infty}{\longrightarrow}\frac12-4\epsilon$, we deduce that, if $\epsilon \in [\frac{1}{8}, \frac12)$, there exists $m_0\in \Nbb^*$ for which $\frac12-2\epsilon(2-\frac{1}{m_0})\leq 0$ and
\begin{equation*}
    \dfrac{\left(\sum_{i=k}^{+\infty}\alpha_i^2\right)^{2-\frac{1}{m_0}}}{ \| \nabla F(\wv_k)\|^2}\underset{k\to +\infty}{=} \Oc(1)~~\mathrm{a.s.}
\end{equation*}
Hence we have $\Pbb(\Xi_1)=1$.  
\end{example}

\section{Discussion and conclusion}
\label{sec7}

In this work, we presented a new framework to investigate convergence properties of stochastic schemes in a non-convex context. 

In Sections~\ref{sec3}-\ref{sec4},  we introduced a framework to apply the KL property in a stochastic setting. 
In this context the notion of ``decay", that is usually key to study convergence on deterministic schemes in a non-convex setting, is associated with that of supermartingale. It then appears to be more difficult to satisfy in the stochastic than in the deterministic case. To circumvent this difficulty, we made the choice to add mild conditions as those described in Theorem \ref{thKL}. 
This allowed us to derive new asymptotic results leveraging mathematical tools different from the standard ODE method and ``trajectory-by-trajectory" strategy used in \cite{benaim1996dynamical,dereich2021convergence, tadic2015convergence}. 
In Sections~\ref{sec5}-\ref{sec6}, we studied the case of the SGD scheme to show that our assumptions are reasonable in practice. In particular, they do not require specific curvature properties for the functions on interest. 

\smallskip

Finally, we focused on the particular case of smooth functions. However, as highlighted in \cite{bolte2010characterizations,Bolte2014}, we are naturally led to consider a non-smooth extension of our framework. 
Using non-smooth version of the $\text{K\L}$ property, Sections~\ref{sec3}-\ref{sec4} could be straightforwardly extended to non-smooth coercive and continuous Lyapunov functions $V$.
The difference would only be of structural order, by replacing the gradient norm by the distance of the limiting subdifferential \cite{mordukhovich2006variational} to zero. 
Note that the stochastic non-smooth case has been investigated for particular schemes (see e.g., \cite{davis2016asynchronous} or \cite{hertrich2022inertial}). 




\bibliographystyle{siamplain}
\bibliography{references}

\end{document}